\theoremstyle{plain}
\newtheorem{theorem}{Theorem}[section]
\newtheorem{lemma}[theorem]{Lemma}
\newtheorem{proposition}[theorem]{Proposition}
\newtheorem{conjecture}[theorem]{Conjecture}
\theoremstyle{definition}
\newtheorem{definition}[theorem]{Definition}
\newtheorem{remark}[theorem]{Remark}
\begin{document}

\title[Lie Triple Derivations of Incidence Algebras]
{Lie Triple Derivations of Incidence Algebras}

\author{Danni Wang and Zhankui Xiao}

\address{Wang: School of Mathematical Sciences, Huaqiao University,
Quanzhou, Fujian, 362021, P. R. China}

\email{378126212@qq.com}

\address{Xiao: Fujian Province University Key Laboratory of Computation Science,
School of Mathematical Sciences, Huaqiao University,
Quanzhou, Fujian, 362021, P. R. China}

\email{zhkxiao@hqu.edu.cn}

\begin{abstract}
Let $\mathcal{R}$ be a $2$-torsion free commutative ring with unity, $X$ a locally finite pre-ordered set
and $I(X,\mathcal{R})$ the incidence algebra of $X$ over $\mathcal{R}$.
If $X$ consists of a finite number of connected components, we prove in this paper that
every Lie triple derivation of $I(X,\mathcal{R})$ is proper.
\end{abstract}

\subjclass[2010]{Primary 16W25, Secondary 16W10, 06A11, 47L35}

\keywords{Lie triple derivation, derivation, incidence algebra}

\thanks{This work is partially supported by the NSF of Fujian Province (No. 2018J01002)
and the National Natural Science Foundation of China (No. 11301195).}

\maketitle

\section{Introduction}\label{xxsec1}

Let $A$ be an associative algebra over $\mathcal{R}$, a commutative ring with unity. Then
$A$ has the Lie algebra structure under the Lie bracket $[x,y]:=xy-yx$. An $\mathcal{R}$-linear map
$D: A\rightarrow A$ is called a derivation if $D(xy)=D(x)y+xD(y)$ for all $x,y\in A$, and
an $\mathcal{R}$-linear map $L: A\rightarrow A$ is called a {\em Lie triple derivation} if
$$
L([[x,y],z])=[[L(x),y],z]+[[x,L(y)],z]+[[x,y],L(z)]
$$
for all $x,y,z\in A$. Let $D$ be a derivation of $A$ and $F$ be an $\mathcal{R}$-linear map
from $A$ into its centre. An observation shows that $D+F$ is a Lie triple derivation if and only if $F$ annihilates
all second commutators $[[x,y],z]$. A Lie triple derivation of the form $D+F$, with $D$ a derivation and $F$ a central-valued map,
will be called a {\em proper} Lie triple derivation. Otherwise, a Lie triple derivation will be called {\em improper}.

The problem to identify a class of algebras on which every Lie triple derivation is proper has its origin in
the Herstein's Lie-type mapping research program \cite{Her}. We refer the reader to Bre\v{s}ar's survey paper \cite{Bre04} for a comprehensive
and historic background. Miers proved that if $A$ is a von Neumann algebra with no central abelian summands,
then each Lie triple derivation of $A$ is proper \cite[Theorem 1]{Miers}. Bre\v{s}ar \cite{Bre93} extended this
result to the prime rings and, moreover, he provided a new way to study all the Lie-type maps in
the Herstein's program. On the other hand, Miers' result was extended to Lie $n$-derivations for
linear case \cite{Abdu} and nonlinear case \cite{FWX}. Here a Lie $3$-derivation means a Lie triple
derivation. Recently, many authors have made essential contributions to the related topics, see
\cite{Lu,SunMa,ZhangWuCao} for nest algebras,
\cite{JiWang} for TUHF algebras, \cite{Benk,BenkEr,JiLiuZhao,LiShen,XiaoWei1} for triangular algebras,
\cite{XiaoWei2} for full matrix algebras, etc.

The objective of this paper is to investigate Lie triple derivations on incidence algebras.
Let $(X,\leqslant)$ be a locally finite pre-ordered set.
That means $\leqslant$ is a reflexive and transitive binary relation on the set $X$, and for any
$x\leqslant y$ there are only finitely many elements $z\in X$ satisfying $x\leqslant z\leqslant y$.
The {\em incidence algebra} $I(X,\mathcal{R})$ of $X$ over $\mathcal{R}$ is defined on the set (see \cite{Kopp,SpDo})
$$
I(X,\mathcal{R}):=\{f: X\times X\longrightarrow \mathcal{R}\mid f(x,y)=0\ \text{if}\ x\nleqslant y\}
$$
with algebraic operations given by
$$\begin{aligned}
(f+g)(x,y)&=f(x,y)+g(x,y),\\
(rf)(x,y)&=rf(x,y),\\
(fg)(x,y)&=\sum_{x\leqslant z\leqslant y}f(x,z)g(z,y)
\end{aligned}$$
for all $f,g\in I(X,\mathcal{R})$, $r\in \mathcal{R}$ and $x,y\in X$. The product $(fg)$ is usually called
{\em convolution} in function theory. It is clear that the full matrix algebra ${\rm M}_n(\mathcal{R})$,
the upper (or lower) triangular matrix algebras ${\rm T}_n(\mathcal{R})$, and the infinite
triangular matrix algebras ${\rm T}_{\infty}(\mathcal{R})$ are examples of incidence algebras.

Ward \cite{Wa} firstly considered the incidence algebra of a partially ordered set (poset)
as the generalized algebra of arithmetic functions. Rota and Stanley developed incidence algebras
as the fundamental structures of enumerative combinatorics. Especially, the theory of
M\"{o}bius functions, including the classical M\"{o}bius function of number theory and the
combinatorial inclusion-exclusion formula, is established in the context of incidence algebras (see \cite{Stanley}).
Following the Stanley's work \cite{St}, automorphisms and related algebraic maps of incidence algebras have been
extensively studied (see \cite{BruL,Kopp,Sp} and the references therein).

Notice that in the theory of operator algebras, the incidence algebra $I(X,\mathcal{R})$ of a finite poset $X$ is
referred as a digraph algebra or a finite dimensional CSL algebra. Hence the second author of this note \cite{Xiao},
Khrypchenko \cite{Khry}, and
Zhang-Khrypchenko \cite{ZhangKh} studied the Herstein's program on incidence algebras
in a linear and combinatorial manner. Our main motivation of this article is,
following the trace of \cite{Xiao,Khry,ZhangKh}, to connect the Herstein's program to operator algebras
depending on the methods of linear algebra. Here we emphasize more on the combinatorial technique and
the computation is to some extent tremendous.

\section{The Finite Case}\label{xxsec2}

In this section, we study Lie triple derivations of the incidence algebra $I(X,\mathcal{R})$ when $X$ is a {\em finite} pre-ordered set.
Let's start with a proposition for general algebras. For an $\mathcal{R}$-algebra $A$, we denote by $\mathcal{Z}(A)$ the
centre of $A$ and say that $A$ satisfies the condition $(\spadesuit)$ if
$$
\mathcal{Z}(A)=\{a\,|\,[[a,x],y]=0,\ \forall x,y\in A\}.
$$

\begin{proposition}\label{sec2.1}
Let $A,B$ be two $\mathcal{R}$-algebras satisfying the condition $(\spadesuit)$.
Then $A$ and $B$ have no improper Lie triple derivations if and only if $A\oplus B$ has no improper Lie triple derivations.
\end{proposition}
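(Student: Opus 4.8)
The plan is to analyze an arbitrary Lie triple derivation of $A\oplus B$ through its block components and reduce every assertion to the two summands. Throughout I would write $\pi_A,\pi_B$ for the canonical projections and $\iota_A,\iota_B$ for the inclusions, and exploit the structural fact used repeatedly that each $\pi_C$ and each $\iota_C$ is an algebra homomorphism with $\pi_C\iota_C=\mathrm{id}_C$ for $C\in\{A,B\}$. Consequently commutators in $A\oplus B$ are computed componentwise, and in particular $[(a,0),(0,b)]=(0,0)$, so a commutator never mixes the two summands.

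For the forward direction (``only if''), given a Lie triple derivation $L$ of $A\oplus B$ I would record it as four $\mathcal{R}$-linear maps via $L(a,b)=(L_{11}a+L_{12}b,\ L_{21}a+L_{22}b)$ with $L_{11}\colon A\to A$, $L_{12}\colon B\to A$, $L_{21}\colon A\to B$, $L_{22}\colon B\to B$. Feeding the defining identity with triples drawn entirely from the $A$-summand and comparing the two components shows at once that $L_{11}$ is a Lie triple derivation of $A$ while $L_{21}$ annihilates every second commutator of $A$; the symmetric choice handles $L_{22}$ and $L_{12}$. The decisive step is to run the identity on the mixed triple $x=(0,b)$, $y=(a,0)$, $z=(a',0)$: the left-hand side is $L(0)=0$, while all but one term on the right collapse because cross-summand commutators vanish, leaving $[[L_{12}b,a],a']=0$ for all $a,a'\in A$. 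This is precisely the hypothesis of condition $(\spadesuit)$ for $A$, whence $L_{12}b\in\mathcal{Z}(A)$; the mirror computation gives $L_{21}a\in\mathcal{Z}(B)$. This is the heart of the argument and the one place where $(\spadesuit)$ is indispensable, since a weaker relation such as $[[a,a'],L_{12}b]=0$ would not suffice; only $(\spadesuit)$ upgrades the vanishing of all second commutators to genuine centrality.

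With these facts I would invoke the hypothesis on $A$ and $B$ to write $L_{11}=D_A+F_A$ and $L_{22}=D_B+F_B$ with $D_A,D_B$ derivations and $F_A,F_B$ central-valued. Setting $D:=D_A\oplus D_B$, which is readily checked to be a derivation of $A\oplus B$, and $F:=L-D$, substitution yields $F(a,b)=(F_Aa+L_{12}b,\ L_{21}a+F_Bb)$, each component of which lies in $\mathcal{Z}(A)$ or $\mathcal{Z}(B)$; hence $F$ is central-valued. Since $L$ and $D$ are Lie triple derivations, so is $F$, and a central-valued Lie triple derivation automatically annihilates all second commutators because every bracket on the right of the defining identity then has a central first entry. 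Therefore $L=D+F$ is proper.

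For the converse (``if''), given a Lie triple derivation $L_A$ of $A$ I would extend it by zero to $\widetilde{L}:=\iota_A L_A\pi_A$ and verify componentwise that $\widetilde{L}$ is a Lie triple derivation of $A\oplus B$. By hypothesis $\widetilde{L}=D+F$ is proper, and restricting through the homomorphisms gives $L_A=\pi_A\widetilde{L}\iota_A=(\pi_A D\iota_A)+(\pi_A F\iota_A)$. Because $\pi_A,\iota_A$ are algebra homomorphisms with $\pi_A\iota_A=\mathrm{id}_A$, the map $\pi_A D\iota_A$ is a derivation of $A$, while $\pi_A F\iota_A$ is central-valued (using $\mathcal{Z}(A\oplus B)=\mathcal{Z}(A)\oplus\mathcal{Z}(B)$) and hence annihilates second commutators; thus $L_A$ is proper, and the same argument applies to $B$. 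The only genuinely delicate point is the mixed-triple computation of the forward direction, which I would carry out in full; the remaining verifications are routine componentwise bookkeeping.
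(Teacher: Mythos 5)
Your proof is correct and follows essentially the same route as the paper: block-decompose $L$, use condition $(\spadesuit)$ to force the cross maps $L_{12},L_{21}$ into the centres, decompose the diagonal maps by hypothesis, and reassemble; the converse likewise extends by zero and projects back. The only difference is that the paper outsources the block-form identities to \cite[Proposition 3.1]{XiaoWei1}, whereas you derive them directly from the defining identity on pure and mixed triples.
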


\begin{proof}
We write $A\oplus B=\left[ \smallmatrix
A & 0\\
0 & B\\
\endsmallmatrix
\right]$ for convenience. Assume that $A$ and $B$ have no improper Lie triple derivations.
Let $L$ be a Lie triple derivation of $A\oplus B$. By \cite[Proposition 3.1]{XiaoWei1}, $L$ is of the form
$$
L\left[
\begin{array}
[c]{cc}%
a & 0\\
0 & b\\
\end{array}
\right]=\left[
\begin{array}
[c]{cc}%
l_A(a)+h_B(b) & 0\\
0 & h_A(a)+l_B(b)\\
\end{array}
\right],
$$
where $l_A: A\rightarrow A$, $l_B: B\rightarrow B$,
$h_A: A\rightarrow B$, $h_B: B\rightarrow A$ are linear maps satisfying
\begin{enumerate}
\item[(a)] $l_A$ is a Lie triple derivation of $A$,
$h_A([[a_1,a_2],a_3])=0$, $[[h_A(a),b_1],b_2]=0$,
for all $a_1, a_2, a_3, a\in A$ and $b_1,b_2\in B$;

\item[(b)] $l_B$ is a Lie triple derivation of $B$,
$h_B([[b_1,b_2],b_3])=0$, $[[h_B(b),a_1],a_2]=0$,
for all $a_1, a_2\in A$ and $b_1, b_2,b_3, b\in B$.
\end{enumerate}
The condition $(\spadesuit)$ implies that $h_A(a)\in \mathcal{Z}(B)$ and $h_B(b)\in \mathcal{Z}(A)$.
By the assumption, $l_A$ (resp. $l_B$) is proper. There exist a derivation $d_A$ of $A$
(resp. $d_B$ of $B$) and a central valued linear map $f_A$ (resp. $f_B$) such that
$l_A=d_A+f_A$ (resp. $l_B=d_B+f_B$). Let $D\left(\left[ \smallmatrix
a & 0\\
0 & b\\
\endsmallmatrix
\right] \right):=\left[ \smallmatrix
d_A(a) & 0\\
0 & d_B(b)\\
\endsmallmatrix
\right]$ and $F\left(\left[ \smallmatrix
a & 0\\
0 & b\\
\endsmallmatrix
\right] \right):=\left[ \smallmatrix
f_A(a)+h_B(b) & 0\\
0 & h_A(a)+f_B(b)\\
\endsmallmatrix
\right]$. Then $L=D+F$ is proper.

Conversely, if $A\oplus B$ has no improper Lie triple derivations, we need show that $A$ (and similarly $B$)
has no improper Lie triple derivations. Let $l_A$ be a Lie triple derivation of $A$. Clearly
$L\left(\left[ \smallmatrix
a & 0\\
0 & b\\
\endsmallmatrix
\right] \right):=\left[ \smallmatrix
l_A(a) & 0\\
0 & 0\\
\endsmallmatrix
\right]$ defines a Lie triple derivation of $A\oplus B$ and hence $L$ is proper.
We have $L=D+F$ with $D$ a derivation such that $D\left(\left[ \smallmatrix
a & 0\\
0 & 0\\
\endsmallmatrix
\right] \right)=\left[ \smallmatrix
d_A(a) & 0\\
0 & h_A(a)\\
\endsmallmatrix
\right]$ and $F$ a central valued linear map such that $F\left(\left[ \smallmatrix
a & 0\\
0 & 0\\
\endsmallmatrix
\right] \right)=\left[ \smallmatrix
f_A(a) & 0\\
0 & -h_A(a)\\
\endsmallmatrix
\right]$. It is straightforward to verify that $d_A$ is a derivation of $A$ and $f_A(a)\in \mathcal{Z}(A)$.
Therefore $l_A=d_A+f_A$ as desired.
\end{proof}

The condition $(\spadesuit)$ is equivalent to that there are no nonzero central inner derivations on $A$
and $B$, which has been explicitly studied in \cite[Sections 3 and 4]{XiaoWei1}. We shall show that
the incidence algebra $I(X,\mathcal{R})$ satisfies the condition $(\spadesuit)$.

Let's introduce some standard notations for the incidence algebra $I(X,\mathcal{R})$.
The unity element $\delta$ of $I(X,\mathcal{R})$ is given by $\delta(x,y)=\delta_{xy}$ for $x\leqslant y$,
where $\delta_{xy}\in \{0,1\}$ is the Kronecker delta. If $x,y\in X$ with $x\leqslant y$, let $e_{xy}$ be
defined by $e_{xy}(u,v)=1$ if $(u,v)=(x,y)$, and $e_{xy}(u,v)=0$ otherwise. Then $e_{xy}e_{uv}=\delta_{yu}e_{xv}$
by the definition of convolution. Moreover, the set $\mathfrak{B}:=\{e_{xy}\mid x\leqslant y\}$ forms an $\mathcal{R}$-linear
basis of $I(X,\mathcal{R})$ when $X$ is finite. For $i\leqslant j$ and $i\neq j$, we write $i<j$ or $j>i$ for short.

Here it is convenient to view $I(X,\mathcal{R})$ as a digraph algebra. This means that
there is a directed graph with the vertex set $X$ associated with $I(X,\mathcal{R})$. This graph contains all the
self loops and the matrix unit $e_{xy}$ corresponds to a directed edge from $y$ to $x$. The following lemma
is a little bit stronger than $(\spadesuit)$.

\begin{lemma}\label{sec2.2}
Let $X$ be finite and connected. Then there are no nonzero central derivations on $I(X,\mathcal{R})$.
\end{lemma}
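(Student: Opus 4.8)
The plan is to proceed in two stages: first pin down the centre $\mathcal{Z}(I(X,\mathcal{R}))$ explicitly, and then exploit the diagonal idempotents $e_{xx}$ together with the factorisations $e_{xy}=e_{xx}e_{xy}$ to force any central derivation to annihilate the whole basis $\mathfrak{B}$.

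First I would compute the centre. Taking a central element $c$ and imposing $ce_{vv}=e_{vv}c$ for each $v\in X$, one has $ce_{vv}=\sum_{u\leqslant v}c(u,v)e_{uv}$ and $e_{vv}c=\sum_{w\geqslant v}c(v,w)e_{vw}$; comparing coefficients kills every off-diagonal entry, so $c$ must be diagonal. Imposing $ce_{xy}=e_{xy}c$ for a strictly comparable pair $x<y$ and reading off the coefficient of $e_{xy}$ then yields $c(x,x)=c(y,y)$. Since $X$ is connected, this equality propagates along the comparability graph and forces all diagonal entries to coincide, whence $\mathcal{Z}(I(X,\mathcal{R}))=\mathcal{R}\delta$. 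This is the only step in which connectedness is used.

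Now let $D$ be a central derivation, so that $D(f)=\lambda(f)\delta$ for a unique $\mathcal{R}$-linear functional $\lambda$ (uniqueness because $r\delta=0$ forces $r=0$). The key observation is that $D(e_{xx})$ is central and hence commutes with $e_{xx}$: from $D(e_{xx})=D(e_{xx}^{2})=2e_{xx}D(e_{xx})$, left-multiplication by $e_{xx}$ gives $e_{xx}D(e_{xx})=2e_{xx}D(e_{xx})$, so $e_{xx}D(e_{xx})=0$ and therefore $D(e_{xx})=0$ for every $x$. For a pair $x<y$ I would then use $e_{xy}=e_{xx}e_{xy}$ and the Leibniz rule to get $D(e_{xy})=D(e_{xx})e_{xy}+e_{xx}D(e_{xy})=\lambda(e_{xy})e_{xx}$. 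Combining this with $D(e_{xy})=\lambda(e_{xy})\delta$ yields $\lambda(e_{xy})(\delta-e_{xx})=0$, and since $\delta=\sum_{z}e_{zz}$ its $(y,y)$-entry is $1$ (as $y\neq x$), forcing $\lambda(e_{xy})=0$, i.e.\ $D(e_{xy})=0$. As $D$ now vanishes on all of $\mathfrak{B}$, we conclude $D=0$.

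I do not expect a serious obstacle; the whole argument lies in choosing the right test elements, namely the idempotents $e_{xx}$ and the one-sided factorisations of $e_{xy}$. The one point that merits care is the centre computation, since $X$ is only pre-ordered: equivalent elements $x\neq y$ with $x\leqslant y\leqslant x$ are allowed, and one must check the coefficient-matching still annihilates both $c(x,y)$ and $c(y,x)$ and still delivers $c(x,x)=c(y,y)$, which it does. It is also worth remarking that the cancellation $e_{xx}D(e_{xx})=0$ is purely formal and requires neither $2$-torsion-freeness nor connectedness, so only the identification $\mathcal{Z}(I(X,\mathcal{R}))=\mathcal{R}\delta$ genuinely depends on the hypothesis that $X$ is connected.
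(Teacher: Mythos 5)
Your argument is correct, and it takes a genuinely more elementary route than the paper. The paper quotes the structure theorem for derivations of incidence algebras (\cite[Theorem 2.2]{Xiao}) to see that each $D(e_{ij})$ has a prescribed ``non-scalar'' shape, and then plays this off against the centrality hypothesis $D(e_{ij})=r\delta$; your proof avoids that external input entirely, using only the Leibniz rule on the idempotent relation $e_{xx}=e_{xx}^{2}$ to get $D(e_{xx})=2e_{xx}D(e_{xx})$ and hence $D(e_{xx})=0$, and then the one-sided factorisation $e_{xy}=e_{xx}e_{xy}$ to force $\lambda(e_{xy})(\delta-e_{xx})=0$ and hence $D(e_{xy})=0$. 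Both proofs rest on the same identification $\mathcal{Z}(I(X,\mathcal{R}))=\mathcal{R}\delta$ (the paper cites \cite{SpDo}; you rederive it, with appropriate care for the pre-order case where $x\leqslant y\leqslant x$ with $x\neq y$ is allowed). What your approach buys is self-containedness and the explicit observation that neither $2$-torsion-freeness nor, beyond the centre computation, connectedness is needed; what the paper's approach buys is economy of exposition, since the coefficient formulas from \cite{Xiao} are reused immediately afterwards in Lemmas \ref{sec2.4} and \ref{sec2.5}. Your steps all check out: in particular the cancellation $e_{xx}D(e_{xx})=2e_{xx}D(e_{xx})$ is valid over any commutative ring, and reading off the $(y,y)$-entry of $\lambda(e_{xy})(\delta-e_{xx})$ with $y\neq x$ correctly yields $\lambda(e_{xy})=0$.
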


\begin{proof}
Since $X$ is connected, it is well-known that $\mathcal{Z}(I(X,\mathcal{R}))=\mathcal{R}\delta$ (see \cite{SpDo} for example).
Let $D$ be a central derivation on $I(X,\mathcal{R})$. Assume $D(e_{ij})=\sum_{e_{xy}\in \mathfrak{B}}C_{xy}^{ij}e_{xy}$,
for all $e_{ij}\in \mathfrak{B}$. By \cite[Theorem 2.2]{Xiao},
$$
D(e_{ij})=\sum_{x<i}C^{ii}_{xi}e_{xj}+C^{ij}_{ij}e_{ij}+\sum_{y>j}C^{jj}_{jy}e_{iy},
$$
where the coefficients satisfy $C_{ij}^{ii}+C_{ij}^{jj}=0$ for $i\leqslant j$ and
$C_{ij}^{ij}+C_{jk}^{jk}=C_{ik}^{ik}$ for $i\leqslant j\leqslant k$.
On the other hand, the assumption implies that $D(e_{ij})=r\delta=\sum_{x\in X}re_{xx}$ is a scalar matrix. Combining the above
facts, we have $D=0$.
\end{proof}

The main result of this section is as follows.

\begin{theorem}\label{main in section 2}
Let $\mathcal{R}$ be a $2$-torsion free commutative ring with unity, and $L$ be a Lie triple derivation of $I(X,\mathcal{R})$.
Then $L$ is proper.
\end{theorem}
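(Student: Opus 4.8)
The plan is to reduce to the connected case and then force $L$ into the shape (derivation) $+$ (central map) by a direct computation on matrix units. Since $X$ is finite, it is a disjoint union of finitely many connected components $X_1,\dots,X_n$; elements in different components are incomparable, so $f(x,y)=0$ whenever $x$ and $y$ lie in distinct components, and hence $I(X,\mathcal{R})=\bigoplus_{k=1}^{n}I(X_k,\mathcal{R})$ as $\mathcal{R}$-algebras. By Lemma \ref{sec2.2} each $I(X_k,\mathcal{R})$ has no nonzero central derivations, so in particular no nonzero central inner derivations, and therefore satisfies $(\spadesuit)$; since $(\spadesuit)$ is inherited by direct sums, Proposition \ref{sec2.1} shows that $I(X,\mathcal{R})$ has no improper Lie triple derivations if and only if each $I(X_k,\mathcal{R})$ does, reducing the theorem to the case in which $X$ is finite and connected, where $\mathcal{Z}(I(X,\mathcal{R}))=\mathcal{R}\delta$.

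Assume now $X$ connected and let $L$ be a Lie triple derivation. I would first determine $L(\delta)$: because $[\delta,\cdot]=0$, inserting $\delta$ into any argument of the defining identity yields $[[L(\delta),x],y]=0$ for all $x,y$, so $L(\delta)\in\mathcal{Z}(I(X,\mathcal{R}))=\mathcal{R}\delta$ by $(\spadesuit)$. Next I would expand $L(e_{ij})=\sum_{u\leqslant v}C^{ij}_{uv}e_{uv}$ and harvest linear relations among the $C^{ij}_{uv}$ by substituting suitable matrix units into the triple-commutator identity. The elementary identities $[e_{ii},e_{ij}]=e_{ij}$, $[e_{ij},e_{jj}]=e_{ij}$ and $[e_{ij},e_{jk}]=e_{ik}$ give $e_{ij}=[[e_{ii},e_{ij}],e_{jj}]$ and $e_{ik}=[[e_{ii},e_{ij}],e_{jk}]$, and applying $L$ to these (and to their analogues inside the comparability blocks arising from the pre-order, i.e.\ pairs with $i\leqslant j$ and $j\leqslant i$) produces the constraints on the coefficients. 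The hypothesis that $\mathcal{R}$ is $2$-torsion free enters exactly where symmetric substitutions make a coefficient appear with a factor $2$, which must then be cancelled.

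The point of this bookkeeping is to match $L$, up to an $\mathcal{R}\delta$-valued correction, with a map of the form described for derivations in \cite[Theorem 2.2]{Xiao}. Concretely, after subtracting a suitable inner derivation $\mathrm{ad}_m$ one expects the scalars $\lambda_{ij}:=C^{ij}_{ij}$ to satisfy the additivity $\lambda_{ij}+\lambda_{jk}=\lambda_{ik}$ together with $C^{ii}_{ij}+C^{jj}_{ij}=0$, which are precisely the conditions appearing in the proof of Lemma \ref{sec2.2} that guarantee the assembled map $D$ is a genuine derivation. One then sets $F:=L-D$. The computation is designed to show that $F$ is $\mathcal{R}\delta$-valued; being a difference of two Lie triple derivations, $F$ is itself a Lie triple derivation, and a central-valued Lie triple derivation annihilates every second commutator $[[x,y],z]$ because the entire right-hand side of the defining identity vanishes. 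Hence $L=D+F$ is proper.

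The main obstacle is the coefficient analysis in the connected case. Two features make it delicate. First, the pre-order structure (as opposed to a partial order) produces full-matrix comparability blocks along the ``diagonal,'' so the relations must be tracked both within and across these blocks, blending the triangular and full-matrix computations. Second, the diagonal images $L(e_{ii})$ couple to every $L(e_{ij})$ through the triple commutators, so the constraints do not decouple one coordinate at a time. Establishing that the whole system of relations is simultaneously consistent with a single derivation $D$ and a single central map $F$—not merely solvable piece by piece—is the technically heavy part, and is where the promised tremendous computation resides.
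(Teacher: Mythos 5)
Your reduction to the connected case is correct and matches the paper's: you invoke Proposition \ref{sec2.1} and Lemma \ref{sec2.2} exactly as the paper does (the observation that components of a finite pre-ordered set give a direct sum decomposition, and that the absence of nonzero central inner derivations yields $(\spadesuit)$, is the intended argument). The connected case, however, is where the theorem actually lives, and there your proposal stops at a roadmap. Phrases such as ``one expects the scalars $\lambda_{ij}:=C^{ij}_{ij}$ to satisfy the additivity $\lambda_{ij}+\lambda_{jk}=\lambda_{ik}$'' describe the conclusion of the hard computation rather than perform it. The paper's proof consists precisely of that computation: Lemma \ref{sec2.4} pins down the support of $L(e_{ii})$ and $L(e_{ij})$ (showing, e.g., that $C^{ii}_{xy}=0$ for $i\neq x<y\neq i$ and $C^{ij}_{ji}=0$), and Lemma \ref{sec2.5} extracts the full system of relations (R1)--(R4), after which the paper closes the argument by observing (Remark \ref{sec2.6}) that these relations force $L$ to be a Lie derivation in the sense of \cite[Lemma 2.4]{ZhangKh}, whence properness follows from \cite[Theorem 2.1]{ZhangKh}. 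You neither cite these lemmas nor supply substitutes, and you explicitly flag the consistency of the whole system as ``the technically heavy part'' left undone.

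Two specific omissions matter beyond the general incompleteness. First, the pre-order (as opposed to partial order) forces relations you do not anticipate, notably $C^{ij}_{ij}+C^{ji}_{ji}=0$ for $i<j<i$ (relation (R3)) and the constancy $C^{ii}_{xx}=C^{ii}_{ii}$ for all $x$ (relation (R4), obtained by propagating along paths using connectivity); your ``additivity plus $C^{ii}_{ij}+C^{jj}_{ij}=0$'' covers only (R1) and (R2). Second, even granting all the relations, your endgame --- subtract an inner derivation $\mathrm{ad}_m$, match with \cite[Theorem 2.2]{Xiao}, and check that the remainder is $\mathcal{R}\delta$-valued --- requires constructing $m$ from the coefficients $C^{ii}_{xi}$, $C^{jj}_{jy}$ and verifying that the residual diagonal part is a scalar multiple of $\delta$; this is exactly the content the paper outsources to \cite[Theorem 2.1]{ZhangKh}. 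So the approach is the right one and essentially the paper's own, but as written it is an announcement of a proof rather than a proof: the coefficient analysis that constitutes the theorem is missing.
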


We only need to prove Theorem \ref{main in section 2} when $X$ is connected. In fact, assume that
$X=\bigsqcup_{i\in I}X_i$ be the union of its distinct connected components, where $I$ is a finite index set.
Let $\delta_i:=\sum_{x\in X_i} e_{xx}$. It follows from \cite[Theorem 1.3.13]{SpDo} that $\{\delta_i\mid i\in I\}$ forms
a complete set of central primitive idempotents. In other words, $I(X,\mathcal{R})=\bigoplus_{i\in I}\delta_i
I(X,\mathcal{R})$. Clearly $\delta_i I(X,\mathcal{R})\cong I(X_i,\mathcal{R})$ for each $i\in I$.
It is straightforward to verify that there are no nonzero central derivations on $I(X,\mathcal{R})$.
Hence we only need to prove Theorem \ref{main in section 2} when $X$ is connected by Proposition
\ref{sec2.1} and Lemma \ref{sec2.2}.

From now on, we assume $X$ is finite and connected until the end of this section.
Let $L: I(X,\mathcal{R})\rightarrow I(X,\mathcal{R})$ be a Lie triple derivation. We denote
for all $i,j\in X$ with $i\leqslant j$
$$
L(e_{ij})=\sum_{e_{xy}\in \mathfrak{B}}C_{xy}^{ij}e_{xy}.
$$
We make the convention $C_{xy}^{ij}=0$, if needed, for $x\nleqslant y$.

\begin{lemma}\label{sec2.4}
The Lie triple derivation $L$ satisfies
\begin{align}
L(e_{ii})&=\sum_{x<i}C_{xi}^{ii}e_{xi}+\sum_{x\in X}C_{xx}^{ii}e_{xx}+\sum_{y>i}C_{iy}^{ii}e_{iy};\label{(1)}\\
L(e_{ij})&=\sum_{x<i}C_{xi}^{ii}e_{xj}+C_{ij}^{ij}e_{ij}+\sum_{y>j}C_{jy}^{jj}e_{iy}, \hspace{6pt} \text{if}\ i\neq j.\label{(2)}
\end{align}
\end{lemma}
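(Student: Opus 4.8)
The plan is to feed diagonal idempotents and the single matrix unit $e_{ij}$ into the defining identity of $L$ and to compare coefficients against the basis $\mathfrak{B}$. The computational engine is the pair of elementary brackets $[e_{pp},e_{xy}]=(\delta_{px}-\delta_{py})e_{xy}$ and $[e_{xy},e_{qq}]=(\delta_{qy}-\delta_{qx})e_{xy}$, which combine to give $[[e_{pp},e_{xy}],e_{qq}]=(\delta_{px}-\delta_{py})(\delta_{qy}-\delta_{qx})e_{xy}$. Thus conjugating a basis element by two diagonal idempotents merely rescales it, and expanding $L(e_{ii})=\sum_{xy}C^{ii}_{xy}e_{xy}$ lets me isolate coefficients one basis vector at a time.

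First I would prove \eqref{(1)}. Since $e_{pp}$ and $e_{ii}$ are both diagonal they commute, so $[[e_{pp},e_{ii}],e_{qq}]=0$, and applying $L$ to this zero also kills the outer term, leaving
$$[[L(e_{pp}),e_{ii}],e_{qq}]+[[e_{pp},L(e_{ii})],e_{qq}]=0$$
for all $p,q$. Expanding both double commutators with the formula above and reading off the coefficient of a fixed $e_{xy}$ with $x\neq y$ (after choosing $q=y$, which makes the common factor $(\delta_{qy}-\delta_{qx})$ nonzero) yields $C^{pp}_{xy}(\delta_{yi}-\delta_{xi})+C^{ii}_{xy}(\delta_{px}-\delta_{py})=0$. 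Whenever $i\notin\{x,y\}$ the first summand vanishes, and taking $p=x$ forces $C^{ii}_{xy}=0$. Hence the only off-diagonal basis elements surviving in $L(e_{ii})$ are those with $x=i$ (the $e_{iy}$, $y>i$) or $y=i$ (the $e_{xi}$, $x<i$), while the diagonal part $\sum_x C^{ii}_{xx}e_{xx}$ is unconstrained; this is exactly \eqref{(1)}.

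For \eqref{(2)} I would exploit the identity $e_{ij}=[[e_{ii},e_{ij}],e_{jj}]$, valid for $i\neq j$. Applying $L$ and its defining relation gives $L(e_{ij})=[[L(e_{ii}),e_{ij}],e_{jj}]+[[e_{ii},L(e_{ij})],e_{jj}]+[[e_{ii},e_{ij}],L(e_{jj})]$. I then substitute the shape of $L(e_{ii})$ and $L(e_{jj})$ just obtained from \eqref{(1)} and evaluate the three double commutators. The middle term is the cleanest: since $[[e_{ii},e_{xy}],e_{jj}]$ is nonzero only when $\{x,y\}=\{i,j\}$, it collapses to $C^{ij}_{ij}e_{ij}$. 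The first term converts each column-$i$ entry $C^{ii}_{xi}e_{xi}$ of $L(e_{ii})$ into $C^{ii}_{xi}e_{xj}$, while annihilating its row-$i$ part and feeding only a multiple of $e_{ij}$ from its diagonal; symmetrically, the third term converts each row-$j$ entry $C^{jj}_{jy}e_{jy}$ of $L(e_{jj})$ into $C^{jj}_{jy}e_{iy}$, annihilating its column-$j$ part. Collecting terms reproduces $\sum_{x<i}C^{ii}_{xi}e_{xj}+C^{ij}_{ij}e_{ij}+\sum_{y>j}C^{jj}_{jy}e_{iy}$; the scalar multiples of $e_{ij}$ generated by the diagonal parts are simply absorbed into $C^{ij}_{ij}$ and yield an automatic relation among diagonal coefficients that needs no separate treatment.

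The routine part is the bracket arithmetic; the step demanding the most care is the bookkeeping in this last substitution, namely checking that the column-$i$ data of $L(e_{ii})$ and the row-$j$ data of $L(e_{jj})$ land precisely in the column-$j$ and row-$i$ positions of $L(e_{ij})$ with the stated coefficients, and that no spurious off-diagonal basis element (say an $e_{xy}$ with $x\neq i$ and $y\neq j$) can appear. Because the middle double commutator annihilates every component of $L(e_{ij})$ except the $(i,j)$ one, the support restriction in \eqref{(2)} is forced directly by this single identity, so no further relations are required.
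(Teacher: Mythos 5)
Your derivation of \eqref{(1)} is correct, and is essentially the paper's own second computation in disguise: the paper's identity $L([[e_{ii},e_{xx}],e_{xx}])=0$ is the case $p=q$ of your family $[[e_{pp},e_{ii}],e_{qq}]=0$, and your version even avoids the $2$-torsion hypothesis at this stage. The gap is in \eqref{(2)}, and it comes from the fact that $X$ is only a \emph{pre-ordered} set: $i<j$ does not exclude $j<i$, so $e_{ji}$ may be a genuine element of $\mathfrak{B}$ and $C^{ij}_{ji}$ a genuine unknown. Computing the middle term honestly, $[[e_{ii},e_{xy}],e_{jj}]=(\delta_{ix}-\delta_{iy})(\delta_{jy}-\delta_{jx})e_{xy}$ is nonzero not only for $(x,y)=(i,j)$ but also for $(x,y)=(j,i)$, where the scalar is $(-1)(-1)=+1$; hence
$$
[[e_{ii},L(e_{ij})],e_{jj}]=C^{ij}_{ij}e_{ij}+C^{ij}_{ji}e_{ji},
$$
and your single identity $e_{ij}=[[e_{ii},e_{ij}],e_{jj}]$ returns, in the $e_{ji}$-slot, only the tautology $C^{ij}_{ji}=C^{ij}_{ji}$. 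Formula \eqref{(2)} asserts $C^{ij}_{ji}=0$, and this cannot be extracted from that identity alone; your closing claim that the middle double commutator annihilates every component of $L(e_{ij})$ except the $(i,j)$ one is exactly where the argument fails. The paper gets $C^{ij}_{ji}=0$ from the extra relation $0=e_{jj}L([[e_{ii},e_{ij}],e_{ij}])=-2e_{jj}L(e_{ij})e_{ij}$, which is where the $2$-torsion-free hypothesis is actually used.

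There is a second leak in the same two-cycle situation $i<j<i$. Your first double commutator kills the $x=j$ summand of the column-$i$ part, since $[[e_{ji},e_{ij}],e_{jj}]=[e_{jj}-e_{ii},e_{jj}]=0$, while the third one produces $-C^{jj}_{ji}e_{jj}$ from the $y=i$ summand of the row-$j$ part (because $[e_{ij},e_{ji}]=e_{ii}-e_{jj}$). So your assembled expression has $e_{jj}$-coefficient $-C^{jj}_{ji}$, whereas \eqref{(2)}, whose first sum includes $x=j$, claims it is $C^{ii}_{ji}$. Reconciling the two requires $C^{ii}_{ji}+C^{jj}_{ji}=0$, which the paper proves by comparing the expansions of $[[e_{ii},e_{ij}],e_{jj}]$ and $[e_{ii},[e_{ij},e_{jj}]]$ (its equations $(9)$--$(11)$). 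If $X$ were a poset both issues would be vacuous and your argument would be complete, but the lemma is stated for pre-ordered $X$, so you need these two additional identities.
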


\begin{proof}
Without loss of generality, we assume that $|X|\geq 2$. Since $X$ is connected, each element $x\in X$ must be
a start vertex or an end vertex of a path, i.e., $x$ covers or is covered by an another element.
Let us choose an arbitrary path with the start vertex $j$ and the end vertex $i$. In other words,
$e_{ij}\in \mathcal{B}$ with $i<j$.

For the end vertex $i$, since $L(e_{ij})=L([e_{ii},[e_{ii},e_{ij}]])$, we have
$$\begin{aligned}
L(e_{ij})&=[L(e_{ii}),[e_{ii},e_{ij}]]+[e_{ii},[L(e_{ii}),e_{ij}]]+[e_{ii},[e_{ii},L(e_{ij})]]\\
&=L(e_{ii})e_{ij}-2e_{ij}L(e_{ii})+e_{ii}L(e_{ii})e_{ij}+e_{ij}L(e_{ii})e_{ii}\\
&\ +e_{ii}L(e_{ij})-2e_{ii}L(e_{ij})e_{ii}+L(e_{ij})e_{ii}.
\end{aligned}\eqno(3)$$
Since $\mathcal{R}$ is $2$-torsion free, left multiplication by $e_{ii}$ and
right multiplication by $e_{yy}$ in $(3)$ leads to
$$\begin{aligned}
C^{ii}_{ii}&=C^{ii}_{jj},\hspace{6pt} \text{if} \hspace{6pt} y=j;\\
C^{ii}_{jy}&=0,\hspace{6pt}\hspace{10pt} \text{if} \hspace{6pt} y\neq i,j.
\end{aligned}\eqno(4)$$
It follows from the relation $(4)$ that
$$\begin{aligned}
L(e_{ii})&=\sum_{e_{xy}\in \mathcal{B}}C_{xy}^{ii}e_{xy}=\sum_{x\leqslant y,x\neq j}C_{xy}^{ii}e_{xy}+C_{ji}^{ii}e_{ji}+C_{jj}^{ii}e_{jj}\\
&=\sum_{x\leqslant i,x\neq i,j}C_{xi}^{ii}e_{xi}+C_{ii}^{ii}e_{ii}+\sum_{x\leqslant y;x\neq j,y\neq i}C_{xy}^{ii}e_{xy}+C_{ji}^{ii}e_{ji}+C_{jj}^{ii}e_{jj}\\
&=\sum_{x<i}C_{xi}^{ii}e_{xi}+C_{ii}^{ii}(e_{ii}+e_{jj})+\sum_{y>i}C_{iy}^{ii}e_{iy}+\sum_{x\leqslant y;x\neq i,j,y\neq i}C_{xy}^{ii}e_{xy}.
\end{aligned}\eqno(5)$$
For any $i\neq x\in X$, from $L([[e_{ii},e_{xx}],e_{xx}])=0$ we get
$$
L(e_{ii})e_{xx}-2e_{xx}L(e_{ii})e_{xx}+e_{xx}L(e_{ii})+e_{ii}L(e_{xx})e_{xx}+e_{xx}L(e_{xx})e_{ii}=0.
$$
Multiplying the above identity by $e_{xx}$ from left and by $e_{yy}$ from right, we obtain
$$
C^{ii}_{xy}=0,\ \text{if}\ i\neq x<y\neq i.
$$
Hence the identity $(5)$ can be rewritten as
$$\begin{aligned}
L(e_{ii})&=\sum_{x<i}C_{xi}^{ii}e_{xi}+C_{ii}^{ii}(e_{ii}+e_{jj})+\sum_{y>i}C_{iy}^{ii}e_{iy}+\sum_{x\neq i,j}C_{xx}^{ii}e_{xx}\\
&=\sum_{x<i}C_{xi}^{ii}e_{xi}+\sum_{x\in X}C_{xx}^{ii}e_{xx}+\sum_{y>i}C_{iy}^{ii}e_{iy}.
\end{aligned}\eqno(6)$$

Let us now consider the start vertex $j$.
Similarly, left multiplication by $e_{xx}$ and right multiplication by $e_{jj}$ in $L(e_{ij})=L([[e_{ij},e_{jj}],e_{jj}])$ leads to
$$\begin{aligned}
C^{jj}_{jj}&=C^{jj}_{ii},\hspace{6pt} \text{if} \hspace{6pt} x=i;\\
C^{jj}_{xi}&=0,\hspace{6pt}\hspace{10pt} \text{if} \hspace{6pt} x\neq i,j.
\end{aligned}\eqno(7)$$
Then left multiplication by $e_{xx}$ and right multiplication by $e_{yy}$ in $L([e_{yy},[e_{yy},e_{jj}]])=0$ leads to
$$
C^{jj}_{xy}=0,\ \text{if}\ j\neq x<y\neq j.
$$
A similar computation shows that
$$
L(e_{jj})=\sum_{x<j}C_{xj}^{jj}e_{xj}+\sum_{y\in X}C_{yy}^{jj}e_{yy}+\sum_{y>j}C_{jy}^{jj}e_{jy}.
\eqno(8)$$
Since each element $x\in X$ must be a start vertex or an end vertex of a path, the identities $(6)$ and $(8)$ describe the
desired form of $L(e_{xx})$ for any $x\in X$.

We next describe the form of $L(e_{ij})$. It follows from equations $(1)$, $(4)$, $(7)$ that
$$\begin{aligned}
L(e_{ij})&=L\big([[e_{ii},e_{ij}],e_{jj}]\big)\\
&=[[L(e_{ii}),e_{ij}],e_{jj}]+[[e_{ii},L(e_{ij})],e_{jj}]+[e_{ij},L(e_{jj})]\\
&=[[\sum_{x<i}C_{xi}^{ii}e_{xi}+\sum_{x\in X}C_{xx}^{ii}e_{xx}+\sum_{y>i}C_{iy}^{ii}e_{iy},e_{ij}],e_{jj}]\\
&\quad+[[e_{ii},\sum_{e_{xy}\in B}C_{xy}^{ij}e_{xy}],e_{jj}]\\
&\quad+[e_{ij},\sum_{x<j}C_{xj}^{jj}e_{xj}+\sum_{x\in X}C_{xx}^{jj}e_{xx}+\sum_{y>j}C_{jy}^{jj}e_{jy}]\\
&=\sum_{x<i}C_{xi}^{ii}e_{xj}+C_{ij}^{ij}e_{ij}-(C_{ji}^{ii}+C_{ji}^{jj})e_{jj}+C_{ji}^{ij}e_{ji}+\sum_{y>j}C_{jy}^{jj}e_{iy}.
\end{aligned}\eqno(9)$$
Analogously,
$$\begin{aligned}
L(e_{ij})&=L\big([e_{ii},[e_{ij},e_{jj}]]\big)\\
&=\sum_{x<i}C_{xi}^{ii}e_{xj}+C_{ij}^{ij}e_{ij}-(C_{ji}^{ii}+C_{ji}^{jj})e_{ii}+C_{ji}^{ij}e_{ji}
+\sum_{y>j}C_{jy}^{jj}e_{iy}.
\end{aligned}\eqno(10)$$
Combining the equations $(9)$ and $(10)$ with the fact $i\neq j$, we get
$$
C_{ji}^{ii}+C_{ji}^{jj}=0. \eqno(11)
$$
Finally, a direct computation shows $0=e_{jj}L([[e_{ii},e_{ij}],e_{ij}])=-2e_{jj}L(e_{ij})e_{ij}$.
Hence $C^{ij}_{ji}=0$. Combining this fact with $(11)$, the identity $(9)$ or $(10)$ gives the desired form $(2)$.
\end{proof}

\begin{lemma}\label{sec2.5}
The coefficients $C^{ij}_{xy}$ are subject to the following relations:
$$\begin{aligned}
&{\rm (R1)}\hspace{6pt} C_{ij}^{ii}+C_{ij}^{jj}=0,  &&\text{if}\hspace{6pt} i<j;\\
&{\rm (R2)}\hspace{6pt} C_{ij}^{ij}+C_{jk}^{jk}=C_{ik}^{ik},&&\text{if}\hspace{6pt} i< j< k \hspace{6pt}\text{and}\hspace{6pt} i\neq k;\\
&{\rm (R3)}\hspace{6pt} C_{ij}^{ij}+C_{ji}^{ji}=0, &&\text{if}\hspace{6pt} i< j< i;\\
&{\rm (R4)}\hspace{6pt} C_{ii}^{ii}=C_{xx}^{ii},  &&\forall x\in X.
\end{aligned}$$
\end{lemma}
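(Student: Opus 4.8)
The plan is to read off each of \textup{(R1)}--\textup{(R4)} by feeding a carefully chosen triple of matrix units into the Lie triple derivation identity
$$
L([[a,b],c])=[[L(a),b],c]+[[a,L(b)],c]+[[a,b],L(c)],
$$
substituting the explicit expressions for $L(e_{ii})$ and $L(e_{ij})$ furnished by Lemma~\ref{sec2.4}, and then comparing the coefficient of one distinguished basis element on the two sides. The recurring mechanism, already exploited in the proof of Lemma~\ref{sec2.4}, is that even when the inner commutator $[[a,b],c]$ vanishes, its derivation expansion on the right need not, and this yields nontrivial linear relations among the $C^{ij}_{xy}$.

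I would establish \textup{(R4)} first, since \textup{(R2)} depends on it. For an edge $x<y$ with $x,y\neq i$ one has $[e_{ii},e_{xy}]=0$, and applying the identity to $a=e_{ii}$, $b=e_{xy}$, $c=e_{yy}$ and collecting the coefficient of $e_{xy}$ yields $C^{ii}_{xx}=C^{ii}_{yy}$. Together with the equalities $C^{ii}_{xx}=C^{ii}_{ii}$ for $x$ comparable to $i$, which are already established within the proof of Lemma~\ref{sec2.4} (the end-vertex and start-vertex relations there), the connectedness of $X$ lets me propagate the equality along a simple path from $i$ to an arbitrary vertex, giving $C^{ii}_{xx}=C^{ii}_{ii}$ for all $x\in X$.

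For \textup{(R1)}, with $i<j$, I would apply the identity to $a=e_{ii}$, $b=c=e_{jj}$; since $[e_{ii},e_{jj}]=0$ the left side is $0$, and comparing the coefficient of $e_{ij}$ on the right gives exactly $C^{ii}_{ij}+C^{jj}_{ij}=0$. For \textup{(R2)}, with $i<j<k$ and $i\neq k$, I would use the genuine factorisation $e_{ik}=[[e_{ij},e_{jk}],e_{kk}]$; expanding $L(e_{ik})$ and matching the coefficient of $e_{ik}$ produces $C^{ik}_{ik}=C^{ij}_{ij}+C^{jk}_{jk}+(C^{kk}_{kk}-C^{kk}_{ii})$, and the parenthesised difference vanishes by \textup{(R4)} applied at the vertex $k$. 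Finally, for \textup{(R3)} in the clique situation $i<j<i$, where both $e_{ij}$ and $e_{ji}$ are present, I would use the relation $[[e_{ij},e_{ji}],e_{ij}]=2e_{ij}$; comparing the coefficient of $e_{ij}$ in $2L(e_{ij})$ and cancelling the factor $2$ (legitimate since $\mathcal{R}$ is $2$-torsion free) gives $C^{ij}_{ij}+C^{ji}_{ji}=0$.

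The main obstacle is the bookkeeping in the clique (non-antisymmetric) cases: when $i<j<i$ the index ranges such as $x<i$ appearing in the formulas of Lemma~\ref{sec2.4} now include clique-mates of $i$, so the expansions of $L(e_{ij})$ and $L(e_{ji})$ acquire extra off-diagonal and diagonal terms that must be tracked with care when isolating a single coefficient. Keeping the derivation of \textup{(R4)} logically prior to \textup{(R2)} is the other point requiring attention, so that the diagonal correction term produced in the computation for \textup{(R2)} can be discharged.
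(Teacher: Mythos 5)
Your proposal is correct and follows essentially the same strategy as the paper: substitute matrix-unit triples into the Lie triple derivation identity, insert the explicit forms of $L(e_{ii})$ and $L(e_{ij})$ from Lemma~\ref{sec2.4}, compare coefficients of a distinguished basis element, and establish (R4) before (R2) so the diagonal correction term cancels. The only differences are the cosmetic choices of triples for (R2) and (R3) --- the paper uses $e_{iq}=[[e_{ii},e_{il}],e_{lq}]$ and $[[e_{ii},e_{il}],e_{li}]=e_{ii}-e_{ll}$ where you use $[[e_{ij},e_{jk}],e_{kk}]=e_{ik}$ and $[[e_{ij},e_{ji}],e_{ij}]=2e_{ij}$; both routes invoke $2$-torsion freeness at the corresponding step and both work.
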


\begin{proof}
We consider the action of Lie triple derivation $L$ on the identity
$[[e_{ij},e_{kl}],e_{pq}]=\delta_{jk}(\delta_{lp}e_{iq}-\delta_{qi}e_{pl})+\delta_{li}(\delta_{qk}e_{pj}-\delta_{jp}e_{kq})$.
By Lemma \ref{sec2.4}, we need to study the following eight cases:
\begin{enumerate}
\item[(A)] $i=j, k=l, p=q$;
\item[(B)] $i=j, k=l, p\neq q$;
\item[(C)] $i=j, k\neq l, p=q$;
\item[(D)] $i=j, k\neq l, p\neq q$;
\item[(E)] $i\neq j, k=l, p=q$;
\item[(F)] $i\neq j, k=l, p\neq q$;
\item[(G)] $i\neq j, k\neq l, p=q$;
\item[(H)] $i\neq j, k\neq l, p\neq q$.
\end{enumerate}
It is clear that the case (E) (resp. the case (F)) can be calculated similarly with the case (C) (resp. the case (D)).
The case (B) can be deduced from the cases (C) and (E) by the Jacobi identity
$[[e_{ii},e_{kk}],e_{pq}]+[[e_{kk},e_{pq}],e_{ii}]+[[e_{pq},e_{ii}],e_{kk}]=0$.
Similarly, the case (G) can be deduced from the cases (D) and (F).
Therefore, we only need to study the cases (A), (C), (D) and (H).

{\bf Case} (A). If $i=j$, $k=l$, $p=q$, we assume $k=p$ to simplify the calculation. Then
$$\begin{aligned}
0&=L([[e_{ii},e_{kk}],e_{kk}])=[[L(e_{ii}),e_{kk}],e_{kk}]+[[e_{ii},L(e_{kk})],e_{kk}]\\
 &=L(e_{ii})e_{kk}-2e_{kk}L(e_{ii})e_{kk}+e_{kk}L(e_{ii})\\
 &\quad +e_{ii}L(e_{kk})e_{kk}-L(e_{kk})e_{ii}e_{kk}-e_{kk}e_{ii}L(e_{kk})+e_{kk}L(e_{kk})e_{ii}\\
&=\delta_{ik}\left(\sum_{x<i}C_{xi}^{ii}e_{xk}+\sum_{y>i}C_{iy}^{ii}e_{ky}-\sum_{x<k}C_{xk}^{kk}e_{xk}-\sum_{y>k}C_{ky}^{kk}e_{ky}\right)\\
 &\quad+\big(C_{ik}^{ii}+C_{ik}^{kk}+\delta_{ik}C_{kk}^{kk} \big)e_{ik}+\big(C_{ki}^{ii}+C_{ki}^{kk}+\delta_{ik}C_{ki}^{kk} \big)e_{ki}\\
 &\quad-\delta_{ik}\big(2C_{kk}^{ii}+2C_{ik}^{ii}+C_{ki}^{kk}+C_{ik}^{kk} \big)e_{kk}.\\
\end{aligned}\eqno(12)$$
Notice that if $i=k$ or the vertices $i,k$ are incomparable, the equation $(12)$ always holds.
If $i\neq k$ and $i,k$ are comparable, then $(12)$ is equivalent to $C_{ik}^{ii}+C_{ik}^{kk}=0$ for $i<k$ and $C_{ki}^{ii}+C_{ki}^{kk}=0$ for $k<i$,
and hence we obtain the relation (R1).

{\bf Case} (C). If $i=j$, $k\neq l$, $p=q$, we assume $i\neq k$ and $i\neq l$. Then
the formulas $(1)$ and $(2)$ imply that
$$\begin{aligned}
0&=L([[e_{ii},e_{kl}],e_{pp}])=[[L(e_{ii}),e_{kl}],e_{pp}]+[[e_{ii},L(e_{kl})],e_{pp}]\\
&=L(e_{ii})e_{kl}e_{pp}-e_{kl}L(e_{ii})e_{pp}-e_{pp}L(e_{ii})e_{kl}+e_{pp}e_{kl}L(e_{ii})\\
 &\quad +e_{ii}L(e_{kl})e_{pp}-L(e_{kl})e_{ii}e_{pp}-e_{pp}e_{ii}L(e_{kl})+e_{pp}L(e_{kl})e_{ii}\\
 &=-\delta_{ip}[(C_{li}^{ii}+C_{li}^{ll})e_{kp}+(C_{ik}^{ii}+C_{ik}^{kk})e_{pl}]+\delta_{pk}[(C_{li}^{ii}+C_{li}^{ll})e_{pi}+(C_{ll}^{ii}-C_{kk}^{ii})e_{pl}]\\
&\quad+\delta_{lp}[(C_{ik}^{ii}+C_{ik}^{kk})e_{ip}+(C_{kk}^{ii}-C_{ll}^{ii})e_{kp}].
\end{aligned}\eqno(13)$$
If $l\neq p\neq k$, then the equality $(13)$ can be rewritten as
$$
0=\delta_{ip}[(C_{li}^{ii}+C_{li}^{ll})e_{kp}+(C_{ik}^{ii}+C_{ik}^{kk})e_{pl}]. \eqno(14)
$$
Notice that when $i\neq p$, the equation $(14)$ always holds.
When $i=p$, we have $C_{li}^{ii}+C_{li}^{ll}=0$ for $l<i$ and $C_{ik}^{ii}+C_{ik}^{kk}=0$ for $i<k$.
If $p=k$, there is $l\neq p$ and $i\ne p$. Hence $(13)$ can be rewritten as
$$
0=(C_{li}^{ii}+C_{li}^{ll})e_{pi}+(C_{ll}^{ii}-C_{kk}^{ii})e_{pl},
$$
which in turn gives
$$
C_{ll}^{ii}=C_{kk}^{ii}\ \mbox{ for } k<l\mbox{ and }l\neq i\neq k, \eqno(15)
$$
and $C_{li}^{ii}+C_{li}^{ll}=0$ for $l<i$. If $p=l$, we similarly have
$$
C_{kk}^{ii}=C_{ll}^{ii}\ \mbox{ for } k<l\mbox{ and }k\neq i\neq l,
\eqno(16)$$
and $C_{ik}^{ii}+C_{ik}^{kk}=0$ for $i<k$.

Recall that $C_{ii}^{ii}=C_{jj}^{ii}$ for $i<j$ by $(4)$ and $C_{ii}^{ii}=C_{kk}^{ii}$ for $k<i$ by $(7)$.
Combining these facts with the identity $(15)$ or $(16)$, we have
$$
C_{xx}^{ii}=C_{yy}^{ii}\ \mbox{ for all } x\leqslant y, \eqno(17)
$$
The connectivity of $X$ shows that there is path from the vertex $i$ to any vertex $x\in X$.
A recursive procedure, using $(17)$, on the length of the path implies the desired relation (R4).

{\bf Case} (D). If $i=j$, $k\neq l$, $p\neq q$, there are two subcases to consider.

{\em Case D.1.} We assume $k=i$ (hence $i\neq l$), $l=p$ and $i\neq q$. Then
$$\begin{aligned}
L(e_{iq})&=L([[e_{ii},e_{il}],e_{lq}])\\
&=[[L(e_{ii}),e_{il}],e_{lq}]+[[e_{ii},L(e_{il})],e_{lq}]+[e_{il},L(e_{lq})]\\
&=\sum_{x<i}C_{xi}^{ii}e_{xq}+C_{ii}^{ii}e_{iq}-C_{ll}^{ii}e_{iq}-C_{qi}^{ii}e_{ll}\\
&\quad +C_{il}^{il}e_{iq}+C_{lq}^{lq}e_{iq}+\sum_{y>q}C_{qy}^{qq}e_{iy}-C_{qi}^{qq}e_{ll}\\
&=\sum_{x\in <i}C_{xi}^{ii}e_{xq}+(C_{il}^{il}+C_{lq}^{lq})e_{iq}+\sum_{y>q}C_{qy}^{qq}e_{iy},
\end{aligned}\eqno(18)$$
where the last identity in $(18)$ follows from $C_{ii}^{ii}=C_{ll}^{ii}$ for $i<l$ by $(4)$ and
the relation (R1). Comparing $(18)$ with the formula $(2)$ of $L(e_{iq})$, we obtain
$$
C_{il}^{il}+C_{lq}^{lq}=C_{iq}^{iq},\hspace{6pt}\mbox{ for } i< l< q \mbox{ and } i\neq q.
$$
Therefore, we obtain the relation (R2).

{\em Case D.2.} We assume $k=i$ (hence $i\neq l$), $l=p$ and $i=q$. Then
$$\begin{aligned}
L(e_{ii})-L(e_{ll})&=L([[e_{ii},e_{il}],e_{li}])\\
&=[[L(e_{ii}),e_{il}],e_{li}]+[[e_{ii},L(e_{il})],e_{li}]+[e_{il},L(e_{li})]\\
&=\sum_{x<i}C_{xi}^{ii}e_{xi}+(C_{ii}^{ii}-C_{ll}^{ii})e_{ii}+(C_{ll}^{ii}-C_{ii}^{ii})e_{ll}+C_{li}^{ii}e_{li}\\
&\quad +C_{il}^{il}e_{ii}-C_{il}^{il}e_{ll}-\sum_{y>l}C_{ly}^{ll}e_{ly}+C_{li}^{ll}e_{li}\\
&\quad +C_{li}^{li}e_{ii}+\sum_{y>i}C_{iy}^{ii}e_{iy}-\sum_{x<l}C_{xl}^{ll}e_{xl}-C_{li}^{li}e_{ll}\\
&=\sum_{x<i}C_{xi}^{ii}e_{xi}+C_{il}^{il}e_{ii}-C_{il}^{il}e_{ll}-\sum_{y>l}C_{ly}^{ll}e_{ly}\\
&\quad +C_{li}^{li}e_{ii}+\sum_{y>i}C_{iy}^{ii}e_{iy}-\sum_{x<l}C_{xl}^{ll}e_{xl}-C_{li}^{li}e_{ll},
\end{aligned}\eqno(19)$$
where the last identity in $(19)$ follows from the relations (R4) and (R1).
On the other hand, by formula $(1)$, we obtain
$$\begin{aligned}
L(e_{ii})-L(e_{ll})&=\sum_{x<i}C_{xi}^{ii}e_{xi}+\sum_{x\in X}C_{xx}^{ii}e_{xx}+\sum_{y>i}C_{iy}^{ii}e_{iy}\\
&\quad-\sum_{x<l}C_{xl}^{ll}e_{xl}-\sum_{y\in X}C_{yy}^{ll}e_{yy}-\sum_{y>l}C_{ly}^{ll}e_{ly}.
\end{aligned}\eqno(20)$$
Combining the equations $(19)$ and $(20)$, we have
$$\begin{aligned}
(C_{il}^{il}+C_{li}^{li})e_{ii}-(C_{il}^{il}+C_{li}^{li})e_{ll}
&=(C_{ii}^{ii}-C_{ii}^{ll})e_{ii}+(C_{ll}^{ii}-C_{ll}^{ll})e_{ll}\\
&\quad+\sum_{x\neq i,l}C_{xx}^{ii}e_{xx}-\sum_{y\neq i,l}C_{yy}^{ll}e_{yy}.
\end{aligned}\eqno(21)$$
Notice that $i\neq l$. Comparing the coefficients of $e_{ii}$ and $e_{ll}$ in $(21)$,
one deduces that $C_{ii}^{ii}-C_{ii}^{ll}=C_{ll}^{ll}-C_{ll}^{ii}$ for $i<l<i$.
Substituting $C_{ii}^{ll}=C_{ll}^{ll}$ and $C_{ll}^{ii}=C_{ii}^{ii}$ from the relation (R4),
we get $C_{ii}^{ii}-C_{ll}^{ll}=C_{ll}^{ll}-C_{ii}^{ii}$. Since $\mathcal{R}$ is $2$-torsion free,
$C_{ii}^{ii}=C_{ll}^{ll}$ for $i<l<i$.
Hence the coefficients of $e_{ii}$ and $e_{ll}$ of the right-hand side of $(21)$ are zero,
which yields the desired relation (R3).

{\bf Case} (H). If $i\neq j$, $k\neq l$, $p\neq q$,
we do not need to calculate since the relations (R1-R4) have been obtained and this completes the proof of the lemma.
\end{proof}

\begin{remark}\label{sec2.6}
In view of \cite[Lemma 2.4]{ZhangKh}, our Lemma \ref{sec2.5} means that every Lie triple derivation of $I(X,\mathcal{R})$
degenerates to a Lie derivation. In other words, Lemma \ref{sec2.5} can be strengthened, i.e., an $\mathcal{R}$-linear
map $L$ of $I(X,\mathcal{R})$ defined by the formulas $(1)$ and $(2)$ is a Lie triple derivation if and only if
the coefficients $C^{ij}_{xy}$ satisfy the relations (R1-R4).
\end{remark}

Notice that a direct proof of the strengthening version of Lemma \ref{sec2.5} (analogous to \cite[Lemma 2.4]{ZhangKh})
needs tedious calculation for the four cases (A,C,D,H). In our draft, it takes about 10 pages. Hence we present here
the Lemma \ref{sec2.5} for reader's convenience.

\begin{proof}[Proof of Theorem \ref{main in section 2}]
It follows from Remark \ref{sec2.6} and \cite[Theorem 2.1]{ZhangKh}.
\end{proof}

\section{The General Case}\label{xxsec3}

In this section, we study Lie triple derivations of $I(X,\mathcal{R})$ when $X$ is a locally finite pre-ordered set.
Let $\tilde{I}(X,\mathcal{R})$ be the $\mathcal{R}$-subspace of $I(X,\mathcal{R})$
generated by the elements $e_{xy}$ with $x\leqslant y$. That means $\tilde{I}(X,\mathcal{R})$
consists exactly of the functions $f\in I(X,\mathcal{R})$ which are nonzero only at a finite number of $(x,y)$.
Clearly $\tilde{I}(X,\mathcal{R})$ is a subalgebra of $I(X,\mathcal{R})$.
Hence $I(X,\mathcal{R})$ becomes an $\tilde{I}(X,\mathcal{R})$-bimodule in the natural manner.
Let $L: \tilde{I}(X,\mathcal{R})\rightarrow I(X,\mathcal{R})$ be a Lie triple derivation, i.e.
$$
L([[f,g],h])=[[L(f),g],h]+[[f,L(g)],h]+[[f,g],L(h)]
$$
for all $f,g,h\in \tilde{I}(X,\mathcal{R})$.
Observe that Lemmas \ref{sec2.4} and \ref{sec2.5} remain valid, when we replace the domain
of $L$ by $\tilde{I}(X,\mathcal{R})$. In fact, although the sums $L(e_{ij})=\sum_{x\leqslant y}C_{xy}^{ij}e_{xy}$ are now infinite,
multiplication by $e_{uv}$ on the left or on the right works as in the finite case.

Let's now recall some notations and results from \cite{ZhangKh}.
For any $f\in I(X,\mathcal{R})$ and $x\leqslant y$, {\it the restriction of $f$} to $\{z\in X\mid x\leqslant z\leqslant y\}$ is defined by
$$
f|_{x}^{y}=\sum _{x\leqslant u\leqslant v\leqslant y}f(u,v)e_{uv}
\eqno(22)$$
Observe that the sum above is finite, and hence $f|_{x}^{y}\in \tilde{I}(X,\mathcal{R})$.
The following fact is \cite[Lemma 3.3]{ZhangKh}.

\begin{lemma}\label{sec3.1}
The map $f\mapsto f|_{x}^{y}$ is an
algebra homomorphism $I(X,\mathcal{R})\rightarrow\tilde{I}(X,\mathcal{R})$.
\end{lemma}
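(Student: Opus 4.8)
The plan is to verify directly that the map $\Phi\colon f\mapsto f|_{x}^{y}$ satisfies the two defining properties of an algebra homomorphism, namely $\mathcal{R}$-linearity and multiplicativity, after first recording that it lands in $\tilde{I}(X,\mathcal{R})$. The last point is immediate: since $X$ is locally finite, the interval $[x,y]=\{z\in X\mid x\leqslant z\leqslant y\}$ is finite, so the defining sum $(22)$ has only finitely many terms and hence $f|_{x}^{y}\in\tilde{I}(X,\mathcal{R})$. The $\mathcal{R}$-linearity $(f+rg)|_{x}^{y}=f|_{x}^{y}+r\,g|_{x}^{y}$ is read off coefficientwise from $(22)$, so the whole content lies in proving the multiplicative law $(fg)|_{x}^{y}=f|_{x}^{y}\cdot g|_{x}^{y}$.

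For the multiplicative law, first I would expand the right-hand side using the basis rule $e_{ab}e_{cd}=\delta_{bc}e_{ad}$:
$$f|_{x}^{y}\cdot g|_{x}^{y}=\Bigl(\sum_{x\leqslant a\leqslant b\leqslant y}f(a,b)e_{ab}\Bigr)\Bigl(\sum_{x\leqslant c\leqslant d\leqslant y}g(c,d)e_{cd}\Bigr)=\sum f(a,b)\,g(b,d)\,e_{ad},$$
where the surviving sum runs over all $a,b,d$ subject to $x\leqslant a\leqslant b\leqslant y$ together with $x\leqslant b\leqslant d\leqslant y$. On the other side, unfolding the definition of convolution gives
$$(fg)|_{x}^{y}=\sum_{x\leqslant u\leqslant v\leqslant y}\Bigl(\sum_{u\leqslant z\leqslant v}f(u,z)\,g(z,v)\Bigr)e_{uv}.$$
I would then match the outer indices $(a,d)\leftrightarrow(u,v)$ and the inner summation variable $b\leftrightarrow z$, and compare the two expressions term by term.

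The one point requiring attention, and the only place a discrepancy could arise, is the range of the intermediate index. In $(fg)|_{x}^{y}$ the inner variable $z$ runs over $u\leqslant z\leqslant v$ with no constraint imposed relative to $x$ and $y$, whereas in $f|_{x}^{y}\cdot g|_{x}^{y}$ the variable $b$ is forced to lie in $[x,y]$. The key observation is that these ranges actually coincide: by transitivity of $\leqslant$, whenever $x\leqslant u\leqslant z\leqslant v\leqslant y$ one automatically has $x\leqslant z\leqslant y$, so the interval $[x,y]$ is convex and no intermediate term is lost or introduced. Invoking this convexity, the combined constraints $x\leqslant a\leqslant b\leqslant d\leqslant y$ on the right-hand side match exactly the chain $x\leqslant u\leqslant z\leqslant v\leqslant y$ governing the left-hand side, so the two sums agree and multiplicativity follows. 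I do not anticipate a genuine obstacle here; the only care needed is in the bookkeeping of summation ranges, which is precisely where this convexity must be checked.
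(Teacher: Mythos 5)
Your verification is correct. Note that the paper itself gives no proof of this lemma: it simply quotes it as \cite[Lemma 3.3]{ZhangKh} (``The following fact is \ldots''), so there is no internal argument to compare against, and your direct computation supplies a self-contained alternative. The one genuinely delicate point --- that the intermediate index $z$ in the convolution $(fg)(u,v)=\sum_{u\leqslant z\leqslant v}f(u,z)g(z,v)$ automatically lies in $\{w\mid x\leqslant w\leqslant y\}$ once $x\leqslant u$ and $v\leqslant y$, by transitivity --- is exactly the point you isolate, and your matching of the two constraint sets ($x\leqslant a\leqslant b\leqslant y$ together with $x\leqslant b\leqslant d\leqslant y$ versus the single chain $x\leqslant a\leqslant b\leqslant d\leqslant y$) is right; since both restrictions are finite sums, the term-by-term expansion of the product raises no convergence issue. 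One cosmetic remark: ``algebra homomorphism'' must be read in the non-unital sense here, since $\delta|_{x}^{y}=\sum_{x\leqslant u\leqslant y}e_{uu}$ is not the identity and $\tilde{I}(X,\mathcal{R})$ need not contain $\delta$ when $X$ is infinite; linearity and multiplicativity, which you prove, are all that is required or used later.
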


For any $f\in I(X,\mathcal{R})$ and $x\leqslant y$, the following observation
$$
e_{xx}fe_{yy}=f(x,y)e_{xy} \eqno(23)
$$
will be extensively used.

\begin{lemma}\label{sec3.2}
Let $L$ be a Lie triple derivation of $I(X,\mathcal{R})$ and $x<y$. Then
$$
L(f)(x,y)=L(f|_{x}^{y})(x,y).
\eqno(24)$$
Moreover, if $L$ is a derivation, then $(24)$ holds for $x=y$ too.
\end{lemma}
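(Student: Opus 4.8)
The plan is to exploit the identity $(23)$, which reads off the $(x,y)$-entry of any $h\in I(X,\mathcal{R})$ via $e_{xx}he_{yy}=h(x,y)e_{xy}$, together with the observation that the second commutator $[[e_{xx},f],e_{yy}]$ only ``sees'' $f$ at the positions $(x,y)$ and $(y,x)$. Indeed, for $x<y$ a short convolution computation gives
$$[[e_{xx},f],e_{yy}]=e_{xx}fe_{yy}+e_{yy}fe_{xx}=f(x,y)e_{xy}+f(y,x)e_{yx}.$$
Both positions $(x,y)$ and $(y,x)$ (the latter only when $y\leqslant x$) lie in the interval $\{(u,v):x\leqslant u\leqslant v\leqslant y\}$, so this element is unchanged when $f$ is replaced by its restriction $f|_{x}^{y}$ of $(22)$. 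Hence $\xi:=[[e_{xx},f],e_{yy}]=[[e_{xx},f|_{x}^{y}],e_{yy}]$, and in particular $L(\xi)$ is computed from one and the same element no matter whether we start from $f$ or from $f|_{x}^{y}$.

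First I would apply the Lie triple derivation rule to $\xi$, writing, for $a\in\{f,f|_{x}^{y}\}$,
$$L(\xi)=[[L(e_{xx}),a],e_{yy}]+[[e_{xx},L(a)],e_{yy}]+[[e_{xx},a],L(e_{yy})].$$
The middle term is exactly what we want: sandwiching by $e_{xx}(\,\cdot\,)e_{yy}$ and using $(23)$ shows its $(x,y)$-entry is $L(a)(x,y)$. Equating the two expansions of the common element $L(\xi)$ (one for $a=f$, one for $a=f|_{x}^{y}$) and comparing $(x,y)$-entries, the whole statement reduces to checking that the two outer terms $[[L(e_{xx}),\,\cdot\,],e_{yy}]$ and $[[e_{xx},\,\cdot\,],L(e_{yy})]$ have equal $(x,y)$-entries for $f$ and for $f|_{x}^{y}$.

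The hard part will be this last verification, but it is purely local and routine. Sandwiching each outer term by $e_{xx}(\,\cdot\,)e_{yy}$ and expanding the convolutions, its $(x,y)$-entry is a \emph{finite} sum whose only dependence on $f$ is through the entries $f(z,y)$ and $f(x,z)$ with $x\leqslant z\leqslant y$ and through $f(x,x)$; the accompanying coefficients are entries of $L(e_{xx})$ and $L(e_{yy})$, which are supplied explicitly by formula $(1)$. Every position $(z,y)$, $(x,z)$, $(x,x)$ occurring here satisfies $x\leqslant\,\cdot\,\leqslant\,\cdot\,\leqslant y$, so it lies in the interval and the corresponding entries of $f$ and of $f|_{x}^{y}$ coincide. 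This forces the two outer terms to agree at $(x,y)$, whence $L(f)(x,y)=L(f|_{x}^{y})(x,y)$. I would emphasise that this argument never expands $L(f)$ as a (possibly infinite) sum $\sum f(i,j)L(e_{ij})$; it applies $L$ only to the single element $\xi$ and to the matrix units $e_{xx},e_{yy}$, and so it remains valid for an arbitrary $f\in I(X,\mathcal{R})$, whether or not $f$ is finitely supported.

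For the ``moreover'' statement, with $L=D$ a derivation and $x=y$, note first that $f|_{x}^{x}=f(x,x)e_{xx}$, since $(x,x)$ is the only admissible pair in $\{(u,v):x\leqslant u\leqslant v\leqslant x\}$. Applying $D$ to $e_{xx}^{2}=e_{xx}$ and sandwiching by $e_{xx}(\,\cdot\,)e_{xx}$ gives $D(e_{xx})(x,x)=2D(e_{xx})(x,x)$, so $D(e_{xx})(x,x)=0$ because $\mathcal{R}$ is $2$-torsion free. Then I would apply $D$ to the identity $e_{xx}fe_{xx}=f(x,x)e_{xx}$ and sandwich the Leibniz expansion by $e_{xx}(\,\cdot\,)e_{xx}$; using $D(e_{xx})(x,x)=0$ the two outer terms vanish and one is left with $0=D(f)(x,x)$. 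Since also $D(f|_{x}^{x})(x,x)=f(x,x)D(e_{xx})(x,x)=0$, both sides of $(24)$ equal zero, which establishes $(24)$ for $x=y$.
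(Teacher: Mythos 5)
Your argument for the main claim ($x<y$) is correct and is essentially the paper's own proof: the paper likewise reads off $L(f)(x,y)$ as the $(x,y)$-entry of $[[e_{xx},L(f)],e_{yy}]$, expands $L([[e_{xx},f],e_{yy}])$ by the triple-derivation rule, and checks term by term that the outer brackets see $f$ only through entries indexed by pairs in $\{(u,v)\mid x\leqslant u\leqslant v\leqslant y\}$. Your observation that $[[e_{xx},f],e_{yy}]=[[e_{xx},f|_x^y],e_{yy}]$ already as elements lets you skip the paper's separate comparison of the two $L([[e_{xx},\cdot\,],e_{yy}])$ terms, which is a mild streamlining.

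The ``moreover'' part, however, contains a genuine gap: you implicitly treat $X$ as a partially ordered set, whereas the paper works with pre-ordered sets, where $x\leqslant z\leqslant x$ does not force $z=x$. Consequently $(x,x)$ need not be the only admissible pair in $\{(u,v)\mid x\leqslant u\leqslant v\leqslant x\}$, so $f|_x^x\neq f(x,x)e_{xx}$ in general, and your intermediate conclusion $D(f)(x,x)=0$ is false. Concretely, take $X=\{1,2\}$ with $1\leqslant 2\leqslant 1$, so that $I(X,\mathcal{R})\cong {\rm M}_2(\mathcal{R})$; for the inner derivation $D=\mathrm{ad}(e_{12})$ and $f=e_{21}$ one gets $D(f)(1,1)=(e_{11}-e_{22})(1,1)=1\neq 0$. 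The step that breaks is ``the two outer terms vanish'': one has $(D(e_{xx})fe_{xx})(x,x)=\sum_{x\leqslant z\leqslant x}D(e_{xx})(x,z)f(z,x)$, and the summands with $z\neq x$ are not killed by $D(e_{xx})(x,x)=0$. The statement itself survives, because every entry $f(z,x)$ or $f(x,z)$ with $x\leqslant z\leqslant x$ occurring in these sums does lie in the interval and hence agrees with the corresponding entry of $f|_x^x$; so the correct conclusion to draw from your Leibniz computation is $D(f)(x,x)=D(f|_x^x)(x,x)$, not that both sides vanish. (The paper sidesteps this case entirely by citing \cite[Lemma 3.4]{ZhangKh}. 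As a minor point, $D(e_{xx})(x,x)=0$ follows from $a=2a\Rightarrow a=0$ and needs no $2$-torsion-freeness.)
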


\begin{proof}
We only need to prove the first claim by \cite[Lemma 3.4]{ZhangKh}.
It follows from $(23)$ that
$$\begin{aligned}
L(f)(x,y)&=[[e_{xx},L(f)],e_{yy}](x,y)\\
&=\big( L([[e_{xx},f],e_{yy}])+[e_{yy},[L(e_{xx}),f]]+[L(e_{yy}),[e_{xx},f]]\big)(x,y)\\
&=f(x,y)L(e_{xy})(x,y)+f(y,x)L(e_{yx})(x,y)\\
&\quad -(L(e_{xx})f)(x,y)+(fL(e_{xx}))(x,y)\\
&\quad +f(x,y)L(e_{yy})(x,x)-(fL(e_{yy}))(x,y)+f(x,x)L(e_{yy})(x,y).
\end{aligned}\eqno(25)$$
In particular,
$$\begin{aligned}
L(f|_{x}^{y})(x,y)&=f|_{x}^{y}(x,y)L(e_{xy})(x,y)+f|_{x}^{y}(y,x)L(e_{yx})(x,y)\\
&\quad -(L(e_{xx})f|_{x}^{y})(x,y)+(f|_{x}^{y}L(e_{xx}))(x,y)\\
&\quad +f|_{x}^{y}(x,y)L(e_{yy})(x,x)-(f|_{x}^{y}L(e_{yy}))(x,y)+f|_{x}^{y}(x,x)L(e_{yy})(x,y).
\end{aligned}\eqno(26)$$
By \cite[Lemma 3.2 (ii)]{ZhangKh}, the third, fourth and sixth terms of the right-hand side
of $(25)$ coincide with the corresponding terms of the right-hand side of $(26)$.
From the definition of the restriction of $f$, it is clear that
$f(x,y)=f|_{x}^{y}(x,y)$ and $f(x,x)=f|_{x}^{y}(x,x)$. Therefore, we only need to
show that the second term of the right-hand side
of $(25)$ coincides with the second term of the right-hand side of $(26)$.
In fact, if $y\nleqslant x$, both summands equal to $0$.
If $y\leqslant x$, then $x\leqslant z\leqslant y\Leftrightarrow y\leqslant z\leqslant x$,
which in turn shows $f(y,x)=f|_{y}^{x}(y,x)=f|_{x}^{y}(y,x)$.
\end{proof}

The following result is implicitly contained in \cite[Remarks 3.5 and 3.7]{ZhangKh}.

\begin{proposition}\label{sec3.3}
Every derivation from $\tilde{I}(X,\mathcal{R})$ to $I(X,\mathcal{R})$ can be
uniquely extended to a derivation of $I(X,\mathcal{R})$.
\end{proposition}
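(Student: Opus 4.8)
The plan is to let Lemma \ref{sec3.2} dictate what any extension must be and then to check that the forced formula genuinely defines a derivation, with Lemma \ref{sec3.1} carrying the computational weight. The first step is to record a localization identity for the given derivation $d$. The argument behind the derivation case of Lemma \ref{sec3.2} applies with $d$ in place of $L$: every element to which $d$ is applied there lies in $\tilde{I}(X,\mathcal{R})$, so the Leibniz rule for $d$ suffices and the diagonal $x=y$ is permitted. This yields, for all $f\in\tilde{I}(X,\mathcal{R})$ and all $x\leqslant y$,
\begin{equation*}
d(f)(x,y)=d\big(f|_{x}^{y}\big)(x,y). \tag{$\ast$}
\end{equation*}

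Uniqueness is then immediate: if $D$ is any derivation of $I(X,\mathcal{R})$ extending $d$, Lemma \ref{sec3.2} applies to $D$ directly and gives $D(f)(x,y)=D(f|_{x}^{y})(x,y)=d(f|_{x}^{y})(x,y)$ for every $f\in I(X,\mathcal{R})$ and $x\leqslant y$, the last equality because $f|_{x}^{y}\in\tilde{I}(X,\mathcal{R})$. Every entry of $D(f)$ is thus prescribed. This also shows how to construct the extension: define $D(f)(x,y):=d(f|_{x}^{y})(x,y)$ for $x\leqslant y$ and $D(f)(x,y):=0$ otherwise. The result lies in $I(X,\mathcal{R})$, and $D$ is $\mathcal{R}$-linear because $f\mapsto f|_{x}^{y}$ and $d$ are; that $D$ restricts to $d$ on $\tilde{I}(X,\mathcal{R})$ is exactly ($\ast$).

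The one substantive point, and where I expect the only real friction, is the Leibniz rule $D(fg)=D(f)g+fD(g)$, verified entry by entry. Fix $x\leqslant y$. By Lemma \ref{sec3.1} the restriction is an algebra homomorphism, so $(fg)|_{x}^{y}=f|_{x}^{y}\,g|_{x}^{y}$, and since both factors lie in $\tilde{I}(X,\mathcal{R})$ where $d$ is a derivation,
\begin{equation*}
D(fg)(x,y)=\big(d(f|_{x}^{y})\,g|_{x}^{y}+f|_{x}^{y}\,d(g|_{x}^{y})\big)(x,y).
\end{equation*}
Expanding the convolution in the first summand gives $\sum_{x\leqslant z\leqslant y}d(f|_{x}^{y})(x,z)\,g|_{x}^{y}(z,y)$. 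Two bookkeeping facts close the gap: the nested restriction $(f|_{x}^{y})|_{x}^{z}=f|_{x}^{z}$ together with ($\ast$) gives $d(f|_{x}^{y})(x,z)=d(f|_{x}^{z})(x,z)=D(f)(x,z)$, while $g|_{x}^{y}(z,y)=g(z,y)$ on the range of summation; hence the first summand equals $(D(f)g)(x,y)$. The symmetric manipulation, using $(g|_{x}^{y})|_{z}^{y}=g|_{z}^{y}$, identifies the second summand with $(fD(g))(x,y)$, completing the Leibniz identity and hence the proof.
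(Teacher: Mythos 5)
Your proof is correct and follows essentially the same route as the paper: the extension is defined by the forced formula $\hat{D}(f)(x,y):=d(f|_x^y)(x,y)$, and uniqueness is obtained from Lemma \ref{sec3.2} exactly as in the text. The only difference is that where the paper delegates the verification that this formula defines a derivation (and restricts back to $d$) to \cite[Remark 3.7]{ZhangKh}, you prove it directly via Lemma \ref{sec3.1}, the nesting identity $(f|_x^y)|_x^z=f|_x^z$ and your localization identity $(\ast)$, which makes the argument self-contained but does not change the construction.
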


\begin{proof}
Let $D: \tilde{I}(X,\mathcal{R})\rightarrow I(X,\mathcal{R})$ be a derivation. We define
$$
\hat{D}(f)(x,y):=D(f|_x^y)(x,y)
$$
for all $f\in I(X,\mathcal{R})$, $x\leqslant y$. Then $\hat{D}$ is a linear extension of $D$ and
is a derivation of $I(X,\mathcal{R})$ by \cite[Remark 3.7]{ZhangKh}.
Let $E$ be a derivation of $I(X,\mathcal{R})$ satisfying $E(g)=D(g)$ for all $g\in \tilde{I}(X,\mathcal{R})$.
We have from Lemma \ref{sec3.2} that
$$
E(f)(x,y)=E(f|_x^y)(x,y)=D(f|_x^y)(x,y)=\hat{D}(f)(x,y)
$$
for all $f\in I(X,\mathcal{R})$ and $x\leqslant y$. Hence $E=\hat{D}$ and this completes the proof of the proposition.
\end{proof}

\begin{lemma}\label{sec3.4}
Let $X$ be connected and $L$ be a Lie triple derivation of $I(X,\mathcal{R})$. Then
$L(f)(x,x)=L(f)(y,y)$ for all $x,y\in X$.
\end{lemma}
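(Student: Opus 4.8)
The plan is to use connectivity to reduce to a single comparable pair, and then to convert the wanted equality between two diagonal entries of $L(f)$ into a statement about a single off-diagonal entry, which is exactly the kind of quantity controlled by Lemma \ref{sec3.2}. First I would observe that, since $X$ is connected, any two vertices are joined by a finite path whose consecutive members are comparable; as equality of the diagonal scalars $L(f)(\cdot,\cdot)$ is transitive, it suffices to prove $L(f)(x,x)=L(f)(y,y)$ for a comparable pair, say $x<y$ (the case $x=y$ being trivial).

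The key device is the elementary identity that, for every $g\in I(X,\mathcal{R})$, the $(x,y)$-entry of $[[e_{xy},g],e_{yy}]$ equals $g(y,y)-g(x,x)$. Applying the defining identity of a Lie triple derivation to the second commutator $[[e_{xy},f],e_{yy}]$ and solving for the term $[[e_{xy},L(f)],e_{yy}]$, then reading off its $(x,y)$-entry via the observation above (with $g=L(f)$), I obtain
\[
L(f)(y,y)-L(f)(x,x)=\bigl(L([[e_{xy},f],e_{yy}])-[[L(e_{xy}),f],e_{yy}]-[[e_{xy},f],L(e_{yy})]\bigr)(x,y).
\]
Thus the lemma is reduced to showing that the right-hand side vanishes.

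For the first summand I would invoke Lemma \ref{sec3.2} (legitimate since $x<y$) to replace $L([[e_{xy},f],e_{yy}])(x,y)$ by $L\bigl(([[e_{xy},f],e_{yy}])|_{x}^{y}\bigr)(x,y)$; a direct computation shows that this restriction to the finite interval collapses to $(f(y,y)-f(x,x))\,e_{xy}$, so by linearity the first summand equals $(f(y,y)-f(x,x))\,C^{xy}_{xy}$. For the remaining two summands I would substitute the explicit shapes of $L(e_{xy})$ and $L(e_{yy})$ supplied by Lemma \ref{sec2.4} and extract the $(x,y)$-entry by hand. The outcome is that the second summand again equals $(f(y,y)-f(x,x))\,C^{xy}_{xy}$ and therefore cancels the first, while the third summand vanishes on its own: relation (R4) of Lemma \ref{sec2.5} forces the diagonal of $L(e_{yy})$ to be the central scalar $C^{yy}_{yy}\,\delta$, and a central diagonal contributes nothing to this off-diagonal entry.

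The computations themselves are short in the poset case, and the genuinely delicate point---and the one I expect to be the main obstacle---is the bookkeeping when $X$ is only pre-ordered. Then the sum $\sum_{u\leqslant x}f(u,x)e_{uy}$ and the restriction $|_{x}^{y}$ pick up extra matrix units indexed by elements equivalent to $x$ or to $y$, and one must check, using relations (R1)--(R4), that all such extra contributions to the three summands still cancel. I would also remark that the (possibly infinite) support of $f$ causes no difficulty: every term is reached either by multiplying with the finitely many relevant matrix units or through the localization of Lemma \ref{sec3.2}, each of which confines the computation to the finite interval $[x,y]$.
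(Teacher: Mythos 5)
Your argument is a genuine variant of the paper's proof rather than a reproduction of it. The paper expands $L([[e_{xx},e_{xy}],f])(x,y)$, repeats the expansion with $f$ replaced by $f|_x^y$, and lets the two expansions cancel term by term, so that the difference $L(f)(y,y)-L(f)(x,x)$ is identified with $L(f|_x^y)(y,y)-L(f|_x^y)(x,x)$ and the burden is shifted to the finite case; you instead expand $L([[e_{xy},f],e_{yy}])(x,y)$ once, isolate $[[e_{xy},L(f)],e_{yy}](x,y)=L(f)(y,y)-L(f)(x,x)$, and evaluate the three correction terms explicitly. Both routes rest on the same two pillars, Lemma \ref{sec3.2} and the structure constants of Section 2, and when $X$ is a \emph{poset} your computation checks out: the first and (signed) second summands cancel, and the third vanishes (though for the third you should also note that the strictly off-diagonal part of $L(e_{yy})$ contributes nothing at $(x,y)$ -- centrality of its diagonal alone is not the whole reason).

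The pre-order verification that you defer, however, is not routine bookkeeping but a genuine gap, and it cannot be closed: the extra contributions do not cancel, because the statement itself fails for genuine pre-orders. Carrying your computation through, after the $C^{xy}_{xy}$ terms cancel one is left with sums of the shape $\sum_{u}\bigl(f(u,x)C^{xx}_{xu}+f(x,u)C^{xx}_{ux}\bigr)-\sum_{v}\bigl(f(v,y)C^{yy}_{yv}+f(y,v)C^{yy}_{vy}\bigr)$, taken over $u\neq x$ with $u\leqslant x\leqslant u$ and $v\neq y$ with $v\leqslant y\leqslant v$, and relations (R1)--(R4) do not kill these. Concretely, take $X=\{x,y\}$ with $x\leqslant y\leqslant x$, so that $I(X,\mathcal{R})\cong{\rm M}_2(\mathcal{R})$, let $L=[e_{yx},\,\cdot\,]$ (a derivation, hence a Lie triple derivation) and $f=e_{xy}$; then $L(f)=e_{yy}-e_{xx}$, so $L(f)(x,x)=-1\neq 1=L(f)(y,y)$ since $\mathcal{R}$ is $2$-torsion free. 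So your instinct that this is the delicate point is exactly right, and the obstruction is real. It is worth observing that the paper's own proof stumbles at the same spot: its closing claim that $L(f|_x^y)(y,y)-L(f|_x^y)(x,x)=0$ ``by Lemma \ref{sec2.5}'' overlooks that, by formula $(2)$ together with (R1), $L(e_{uv})$ for distinct equivalent $u,v$ carries the diagonal part $C^{uu}_{vu}(e_{vv}-e_{uu})$. Both arguments are sound only under the additional hypothesis that $\leqslant$ is a partial order (or, more generally, that $f$ vanishes on pairs of distinct equivalent elements), and any honest write-up of your proof should record that restriction rather than promise a cancellation that does not occur.
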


\begin{proof}
Since $X$ is connected, we assume $x<y$ without lose of generality. Then
$$\begin{aligned}
L([e_{xy},f])(x,y)&=L([[e_{xx},e_{xy}],f])(x,y)\\
&=\big( [[L(e_{xx}),e_{xy}],f]+[[e_{xx},L(e_{xy})],f]+[e_{xy},L(f)]\big)(x,y)\\
&=f(y,y)L(e_{xx})(x,x)-(L(e_{xx})f)(y,y)-(fL(e_{xx}))(x,x)\\
&\quad +f(x,x)L(e_{xx})(y,y)+(L(e_{xy})f)(x,y)-f(x,y)L(e_{xy})(x,x)\\
&\quad -f(x,x)L(e_{xy})(x,y)+L(f)(y,y)-L(f)(x,x).
\end{aligned}\eqno(27)$$
Replacing $f$ by $f|_x^y$ in $(27)$, we have
$$\begin{aligned}
L([e_{xy},f|_x^y])(x,y)&=f|_x^y(y,y)L(e_{xx})(x,x)-(L(e_{xx})f|_x^y)(y,y)-(f|_x^yL(e_{xx}))(x,x)\\
&\quad +f|_x^y(x,x)L(e_{xx})(y,y)+(L(e_{xy})f|_x^y)(x,y)-f|_x^y(x,y)L(e_{xy})(x,x)\\
&\quad -f|_x^y(x,x)L(e_{xy})(x,y)+L(f|_x^y)(y,y)-L(f|_x^y)(x,x).
\end{aligned}\eqno(28)$$
By Lemmas \ref{sec3.1} and \ref{sec3.2},
$$
L([e_{xy},f])(x,y)=L([e_{xy},f]|_x^y)(x,y)=L([e_{xy}|_x^y,f|_x^y])(x,y)=L([e_{xy},f|_x^y])(x,y).
$$
Let's now compare the equations $(27)$ and $(28)$. Clearly, $f(x,y)=f|_{x}^{y}(x,y)$, $f(x,x)=f|_{x}^{y}(x,x)$
and $f(y,y)=f|_{x}^{y}(y,y)$. Hence the first, fourth, sixth and seventh terms of the right-hand side
of $(27)$ coincide with the corresponding terms of the right-hand side of $(28)$.
Notice that $(L(e_{xy})f)(x,y)=(L(e_{xy})f|_x^y)(x,y)$ by \cite[Lemma 3.2 (ii)]{ZhangKh}.
For the second summand, it follows from (i) and (ii) of \cite[Lemma 3.2]{ZhangKh} that
$$
(L(e_{xx})f|_x^y)(y,y)=(L(e_{xx})(f|_x^y)|_y^y)(y,y)=(L(e_{xx})f|_y^y)(y,y)=(L(e_{xx})f)(y,y).
$$
A similar procedure can be done for the third summand
of the right-hand side of $(28)$. Therefore,
$$
L(f)(y,y)-L(f)(x,x)=L(f|_x^y)(y,y)-L(f|_x^y)(x,x),
$$
the latter being zero by Lemma \ref{sec2.5}.
\end{proof}

\begin{definition}\label{sec3.5}
For any $f\in I(X,\mathcal{R})$, we define the {\it diagonal} of $f$ by
$$
f_d(x,y)=
\begin{cases}
f(x,y), & x=y,\\
0, & x\neq y.
\end{cases}
$$
\end{definition}

The main theorem of this paper is as follows.

\begin{theorem}\label{Main Theorem}
Let $X$ be connected and $\mathcal{R}$ be $2$-torsion free. Then every Lie triple derivation of $I(X,\mathcal{R})$ is proper.
\end{theorem}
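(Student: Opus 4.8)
The plan is to peel off the central part of $L$ first, and then to recognize what remains as the canonical extension of a derivation of $\tilde{I}(X,\mathcal{R})$. Since $X$ is connected, Lemma \ref{sec3.4} tells me that the diagonal entry $L(f)(x,x)$ does not depend on $x$; hence $L(f)_d=L(f)(x_0,x_0)\,\delta$ for any fixed $x_0\in X$, and the assignment
$$
F(f):=L(f)_d=L(f)(x_0,x_0)\,\delta
$$
is a well-defined $\mathcal{R}$-linear map $I(X,\mathcal{R})\to\mathcal{R}\delta=\mathcal{Z}(I(X,\mathcal{R}))$, i.e.\ a central-valued map. I then set $D:=L-F$, so that $D(f)$ is exactly the strictly off-diagonal part of $L(f)$ for every $f$. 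My goal is to show that $D$ is a derivation; once this is done, $F=L-D$ is a difference of a Lie triple derivation and a (Lie triple) derivation, hence itself a central-valued Lie triple derivation, and therefore it annihilates every second commutator $[[f,g],h]$ automatically, since $[[F(f),g],h]=0$ whenever $F(f)$ is central. This yields the proper decomposition $L=D+F$.

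To prove that $D$ is a derivation, I would first verify the restriction identity $D(f)(x,y)=D(f|_x^y)(x,y)$ for all $x\leqslant y$. For $x<y$ this follows from Lemma \ref{sec3.2} together with the fact that $F$ contributes nothing off the diagonal, while for $x=y$ both sides vanish by the construction of $F$. Consequently $D$ has precisely the shape of the canonical extension in Proposition \ref{sec3.3}, so it suffices to check that the restriction $D|_{\tilde{I}(X,\mathcal{R})}$ is a derivation from $\tilde{I}(X,\mathcal{R})$ into $I(X,\mathcal{R})$; the proposition then upgrades it to a derivation $\hat D=D$ of the whole algebra.

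For the restriction I would use that Lemmas \ref{sec2.4} and \ref{sec2.5} remain valid on $\tilde{I}(X,\mathcal{R})$. Applying $F$ to the explicit forms $(1)$ and $(2)$ and invoking (R4) to see that the diagonal of $L(e_{ii})$ equals $C_{ii}^{ii}\delta$, I obtain
$$
D(e_{ii})=\sum_{x<i}C_{xi}^{ii}e_{xi}+\sum_{y>i}C_{iy}^{ii}e_{iy},\qquad
D(e_{ij})=\sum_{x<i}C_{xi}^{ii}e_{xj}+C_{ij}^{ij}e_{ij}+\sum_{y>j}C_{jy}^{jj}e_{iy}\ (i\neq j).
$$
This is exactly the shape of a derivation described in the proof of Lemma \ref{sec2.2} (via \cite[Theorem 2.2]{Xiao}), and the two relations needed there for the Leibniz rule on the basis products $e_{ij}e_{jk}=e_{ik}$ are precisely (R1) and (R2) of Lemma \ref{sec2.5}. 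Hence $D|_{\tilde{I}(X,\mathcal{R})}$ is a derivation, which completes the argument.

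The step I expect to be the main obstacle is the passage to the infinite setting, namely making sure that the local, entrywise data really assemble into a genuine derivation of the full algebra $I(X,\mathcal{R})$ rather than merely of $\tilde{I}(X,\mathcal{R})$. The coefficient relations (R1)--(R4) come from a finite, unital analysis, whereas $\tilde{I}(X,\mathcal{R})$ is non-unital and the sums $L(e_{ij})=\sum_{x\leqslant y}C_{xy}^{ij}e_{xy}$ are now possibly infinite, so $\delta\notin\tilde{I}(X,\mathcal{R})$ and one cannot apply the finite case verbatim. The restriction technology of Lemmas \ref{sec3.1} and \ref{sec3.2} together with Proposition \ref{sec3.3} is exactly what bridges this gap, and the connectivity hypothesis is indispensable: it is what forces the diagonal of $L(f)$ to be scalar (Lemma \ref{sec3.4}) and hence allows the central part $F$ to be split off cleanly.
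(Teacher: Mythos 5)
Your proposal is correct and follows essentially the same route as the paper: split off the diagonal part $F(f)=L(f)_d$ (central by Lemma \ref{sec3.4}), show $D=L-F$ agrees with the canonical extension of its restriction to $\tilde{I}(X,\mathcal{R})$ via Lemma \ref{sec3.2} and Proposition \ref{sec3.3}, and use the finite-case analysis to see that this restriction is a derivation. The only cosmetic difference is that you verify the Leibniz rule on basis elements directly from (R1)--(R2) (you would also want (R3) for the cyclic case $i<j<i$), whereas the paper invokes Theorem \ref{main in section 2}, i.e.\ Remark \ref{sec2.6} and \cite[Theorem 2.1]{ZhangKh}.
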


\begin{proof}
Let $L$ be a Lie triple derivation of $I(X,\mathcal{R})$. Define $Z(f):=L(f)_d$ and $D(f):=L(f)-Z(f)$.
Then $Z$ is a linear map from $I(X,\mathcal{R})$ to the centre of $I(X,\mathcal{R})$ by Lemma \ref{sec3.4}.
We only need to show that $D$ is a derivation of $I(X,\mathcal{R})$.
Restricting $D$ to $\tilde{I}(X,\mathcal{R})$, we get that $D: \tilde{I}(X,\mathcal{R})\rightarrow I(X,\mathcal{R})$
is a derivation by Theorem \ref{main in section 2}. Extend $D$ to a derivation $\hat{D}$ of $I(X,\mathcal{R})$ by Proposition \ref{sec3.3}.
Notice that
$$
\hat{D}(f)(x,y)=D(f|_x^y)(x,y)=L(f|_x^y)(x,y)-L(f|_x^y)_d(x,y). \eqno(29)
$$
If $x<y$, the equation $(29)$ implies $\hat{D}(f)(x,y)=L(f|_x^y)(x,y)$, which is $L(f)(x,y)$ by Lemma \ref{sec3.2}.
In this case $L(f)_d(x,y)=0$, and hence $\hat{D}(f)(x,y)=L(f)(x,y)=D(f)(x,y)$.
If $x=y$, then the right-hand side of $(29)$ is zero. On the other hand, $D(f)(x,x)=L(f)(x,x)-L(f)_d(x,x)=0$.
Thus we get that $\hat{D}=D$ and $D$ is a derivation of $I(X,\mathcal{R})$.
\end{proof}

The reader may find that Theorem \ref{Main Theorem} can be generalized to the case when
$X$ consists of a finite number of connected components. The following conjecture is to some extent natural.

\begin{conjecture}
Let $(X,\leqslant)$ be a locally finite pre-ordered set and $\mathcal{R}$ be $2$-torsion free.
Then every Lie triple derivation of $I(X,\mathcal{R})$ is proper.
\end{conjecture}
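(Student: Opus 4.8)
The plan is to mimic, in the locally finite setting, the reduction used immediately after Theorem~\ref{main in section 2} to pass from a finite poset to its connected components, and then to invoke Theorem~\ref{Main Theorem} on each piece. First I would write $X=\bigsqcup_{i=1}^{n}X_{i}$ as the disjoint union of its finitely many connected components and set $\delta_{i}:=\sum_{x\in X_{i}}e_{xx}$. Although each $\delta_{i}$ is now an infinite diagonal sum, it is a well-defined idempotent of $I(X,\mathcal{R})$, and I would check that $\{\delta_{i}\}_{i=1}^{n}$ is a complete set of orthogonal central idempotents. Since there are only finitely many components, this yields an algebra decomposition $I(X,\mathcal{R})=\bigoplus_{i=1}^{n}\delta_{i}I(X,\mathcal{R})$ with $\delta_{i}I(X,\mathcal{R})\cong I(X_{i},\mathcal{R})$, the locally finite analogue of the splitting cited from \cite[Theorem 1.3.13]{SpDo}; no convergence issue arises precisely because the number of summands is finite.

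The second step is to verify that each connected block $I(X_{i},\mathcal{R})$ satisfies the condition $(\spadesuit)$, so that Proposition~\ref{sec2.1} applies. Here the inclusion $\mathcal{Z}\subseteq\{a:[[a,f],g]=0\ \forall f,g\}$ is trivial, and $\mathcal{Z}(I(X_{i},\mathcal{R}))=\mathcal{R}\delta_{i}$ because $X_{i}$ is connected. For the reverse inclusion I would show that an element $a$ with $[a,f]\in\mathcal{R}\delta_{i}$ for all $f$ must itself be central, which is exactly the assertion that there are no nonzero central inner derivations; this is the locally finite extension of Lemma~\ref{sec2.2}. The natural tool is the family of restriction homomorphisms $f\mapsto f|_{x}^{y}$ of Lemma~\ref{sec3.1}, which transport the testing of $[[a,e_{xy}],e_{uv}]$ against matrix units into finite intervals where the computation of Lemma~\ref{sec2.2} can be carried out.

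With $(\spadesuit)$ in hand for each block, the final step is an induction on the number of components. By Theorem~\ref{Main Theorem} no $I(X_{i},\mathcal{R})$ has improper Lie triple derivations. At stage $k$ I would apply Proposition~\ref{sec2.1} to $A=\bigoplus_{i=1}^{k}I(X_{i},\mathcal{R})$ and $B=I(X_{k+1},\mathcal{R})$; this requires $A$ to satisfy $(\spadesuit)$, which is automatic because both the centre and the second commutator annihilator of a finite direct sum split componentwise, so $(\spadesuit)$ is inherited by finite direct sums. Hence $A\oplus B$ again satisfies $(\spadesuit)$ and, by the Proposition, has no improper Lie triple derivations; after $n-1$ steps the same conclusion holds for $I(X,\mathcal{R})$ itself.

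The hard part will be Step~2, namely re-establishing Lemma~\ref{sec2.2} when $X_{i}$ is only locally finite. The structure of the central idempotents and the identity $\mathcal{Z}=\mathcal{R}\delta_{i}$ must be re-justified once $\delta_{i}$ ceases to be a finite sum, and the argument that a central-valued inner derivation vanishes has to be assembled from its restrictions to finite intervals rather than run as a single finite computation. Everything downstream—the decomposition of Step~1 and the induction of Step~3—is then routine bookkeeping.
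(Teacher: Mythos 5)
This statement is left \emph{open} in the paper: it is stated as a conjecture, the authors prove properness only when $X$ has a finite number of connected components (see the abstract and the remark immediately preceding the conjecture), and the whole point of the conjecture is the case of infinitely many components. Your proof begins by writing $X=\bigsqcup_{i=1}^{n}X_{i}$ with \emph{finitely many} connected components, but local finiteness of the pre-order does not bound the number of components: an infinite antichain, for instance, is locally finite with infinitely many singleton components. So what you have written is a (reasonable) account of the finite-component reduction the authors themselves sketch after Theorem~\ref{main in section 2} -- decompose along the central idempotents $\delta_i$, verify $(\spadesuit)$ on each connected block, and induct using Proposition~\ref{sec2.1} together with Theorem~\ref{Main Theorem} -- but it does not address the conjecture.

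The steps that genuinely break down when the component set is infinite are worth naming. First, $I(X,\mathcal{R})=\bigoplus_{i}\delta_{i}I(X,\mathcal{R})$ fails: an element of $I(X,\mathcal{R})$ may be nonzero on infinitely many components, so the algebra is a direct \emph{product} $\prod_{i}I(X_{i},\mathcal{R})$ rather than a direct sum, and $\delta=\sum_i\delta_i$ is not a finite sum of the $\delta_i$. Second, Proposition~\ref{sec2.1} handles two summands and extends by induction only to finitely many; its proof rests on a $2\times 2$ block decomposition of $L$ from \cite{XiaoWei1}, and controlling the off-diagonal ``$h$'' maps across infinitely many blocks simultaneously is exactly the difficulty. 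Third, the centre becomes the product $\prod_i\mathcal{R}\delta_i$, so exhibiting the central-valued summand $F$ of a putative decomposition $L=D+F$ requires a uniform construction over all components at once. None of your three steps engages with these issues, so the conjecture remains untouched by your argument.
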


\noindent{\bf Acknowledgements}.
The authors would like to thank the referees for their valuable comments and suggestions
which significantly helped us improve the final presentation of this paper.

\end{document}